\newtheorem{theorem}{Theorem}[section]
\newtheorem{corollary}[theorem]{Corollary}
\newtheorem{lemma}[theorem]{Lemma}
\newtheorem{definition}[theorem]{Definition}
\newtheorem{remark}[theorem]{Remark}
\numberwithin{equation}{section}
\begin{document}
\title{Mass-conserving solutions to coagulation-fragmentation equations with balanced growth} 
%\thanks{}
\author{Philippe Lauren\c{c}ot}
\address{Institut de Math\'ematiques de Toulouse, UMR~5219, Universit\'e de Toulouse, CNRS \\ F--31062 Toulouse Cedex 9, France}
\email{laurenco@math.univ-toulouse.fr}

\keywords{coagulation, fragmentation, mass conservation}
\subjclass{45K05}

\date{\today}

%%%%%%%%%%%%%%%%
%%%%%%%%%%%%%%%%
\begin{abstract}
A specific class of coagulation and fragmentation coefficients is considered for which the strength of the coagulation is balanced by that of the multiple fragmentation. Existence and uniqueness of mass-conserving solutions are proved when the initial total mass is sufficiently small. 
\end{abstract}
%%%%%%%%%%%%%%%%
%%%%%%%%%%%%%%%%

\maketitle

%
%     HEADLINES
%
\pagestyle{myheadings}
\markboth{\sc{Philippe Lauren\c cot}}{\sc{Mass-conserving solutions to C-F equations with balanced growth}}

%%%%%%%%%%%%%%%%
%%%%%%%%%%%%%%%%
\section{Introduction}\label{sec1}
%%%%%%%%%%%%%%%%
%%%%%%%%%%%%%%%%

The time evolution of the size distribution function of a population of particles growing by successive mergers and spontaneously breaking apart into smaller pieces is described by the coagulation-fragmentation equation
\begin{subequations}\label{a1}
\begin{align}
\partial_t f(t,x) & = \mathcal{C}f(t,x) + \mathcal{F}f(t,x)\ , \qquad (t,x) \in (0,\infty)^2\ , \label{a1a} \\
f(0,x) & = f^{in}(x)\ , \qquad x\in (0,\infty)\ , \label{a1d}
\end{align}
where the coagulation term $\mathcal{C}f$ and the fragmentation term $\mathcal{F}f$ are given by
\begin{equation}
\mathcal{C}f(x) := \frac{1}{2} \int_0^x K(y,x-y) f(x-y) f(y)\ \mathrm{d}y - \int_0^\infty K(x,y) f(x) f(y)\ \mathrm{d}y\ , \qquad x>0\ , \label{a1b}
\end{equation}
and 
\begin{equation}
\mathcal{F}f(x) := - a(x) f(x) + \int_x^\infty a(y) b(x,y) f(y)\ \mathrm{d}y\ , \qquad x>0\ , \label{a1c}
\end{equation}
\end{subequations}
respectively. In \eqref{a1}, $f(t,x)$ denotes the size distribution function of particles of size $x\in (0,\infty)$ at time $t>0$ and the sizes of the particles vary as a consequence of coagulation and fragmentation reactions. In \eqref{a1b}, $K$ denotes the coagulation kernel which is a non-negative and symmetric function defined on $(0,\infty)^2$ and $K(x,y)=K(y,x)$ is the rate at which two particles with respective sizes $x$ and $y$ merge. The first term in \eqref{a1b} accounts for the formation of particles of size $x$ resulting from the coalescence of particles with respective sizes $y\in (0,x)$ and $x-y$ while the second term describes the disappearance of particles of size $x$ after coagulation with other particles. In \eqref{a1c}, $a(x)$ is the overall fragmentation rate of particles with size $x$ and the first term in \eqref{a1c} describes the loss of particles of size $x$ due to spontaneous breakage. The second term in \eqref{a1c} accounts for the appearance of new particles of size $x$ resulting from the break-up of a particle of size $y>x$ and $b(x,y)$ is the daughter distribution function providing the distribution of particles with size $x\in (0,y)$ resulting from the breakage of a particle with size $y$. We only consider here the case where there is no loss of matter during splitting; that is, 
\begin{equation}
\int_0^y x b(x,y)\ \mathrm{d}x = y\ , \qquad y>0\ . \label{a2}
\end{equation}
In fact, there is also no loss of matter during coalescence events, so that conservation of matter is expected throughout time evolution. From a mathematical point of view, this means that the first moment $M_1(f(t))$ of the distribution function $f(t)$ at time $t>0$, which somehow corresponds to the total mass of the system of particles, remains unchanged as time goes by; that is, 
\begin{equation}
M_1(f(t)) := \int_0^\infty x f(t,x)\ \mathrm{d}x = \varrho = M_1(f^{in}) := \int_0^\infty x f^{in}(x)\ \mathrm{d}x\ , \qquad t\ge 0\ . \label{a3}
\end{equation}
It is however by now well-known that the conservation of matter may fail to be true and there are basically two mechanisms responsible for the infringement of \eqref{a3}. On the one hand, for coagulation kernels increasing rapidly with size, a runaway growth might occur, thereby producing particles with infinite size. Since the distribution function $f$ does not include such particles, matter escapes from the system of particles with finite size to form giant clusters and the first moment of $f$ decays after some time. This phenomenon is known as \textit{gelation} and takes place when $K(x,y)$ increases faster than $\sqrt{xy}$ as $x+y\to \infty$ and the fragmentation is weak enough \cite{BLLxx, EMP02, ELMP03, Jeon98, Laur00, Leyv83, LeTs81}. On the other hand, if there is no coagulation and the overall fragmentation rate $a(x)$ is unbounded as $x\to 0$, small particles break apart at a high rate and thus reduce to particles with size zero or dust which are also not accounted for in the distribution function $f$. Another loss of matter occurs in that case and is known as \textit{shattering} \cite{Bana06, BLLxx, Fili61, McZi87}. To be more specific, when the coagulation and fragmentation coefficients are given by
\begin{subequations}\label{a4}
\begin{equation}
K(x,y) = K_0 \left( x^\alpha y^{\lambda-\alpha} + x^{\lambda-\alpha} y^\alpha \right)\ , \qquad (x,y)\in (0,\infty)^2\ , \label{a4a}
\end{equation}
with $\alpha\in [0,1]$, $\lambda\in [2\alpha,1+\alpha]$, and $K_0 > 0$, and
\begin{equation}
a(x) = a_0 x^\gamma\ , \quad b(x,y) = (\nu+2) x^\nu y^{-\nu-1}\ , \qquad 0<x<y\ , \label{a4b}
\end{equation}
\end{subequations}
with $\gamma\in\mathbb{R}$, $\nu\in (-2,\infty)$, and $a_0> 0$, the following results are available: on the one hand, shattering is observed when $\gamma<0$ in \eqref{a4b} and there is no coagulation $(K_0=0)$ \cite{Bana06, Fili61, McZi87}. On the other hand, the gelation phenomenon takes place when $\alpha>1/2$ in \eqref{a4a}, $\gamma\in (0,\lambda-1)$ in \eqref{a4b}, and the total mass $\varrho=M_1(f^{in})$ of the initial condition is sufficiently large compared to the parameter $a_0$ in \eqref{a4b} \cite{ELMP03, EMP02, Laur00, ViZi89}. Mass-conserving solutions exist when $\gamma\ge 0$ and, either $\lambda\in [0,1]$, or $\lambda\in (1,2]$ and $\gamma > \lambda-1$ \cite{BaCa90, daCo95, ELMP03, EMRR05, LaMi02b, Stew89, Whit80}, which is in accordance with the numerical simulations and formal computations performed in \cite{Pisk12b, ViZi89}. Let us point out that the above mentioned results do not depend on the daughter distribution function $b$ when only a finite number of fragments are formed during breakup; that is, when $x\mapsto b(x,y)\in L^1(0,y)$ for all $y>0$. This corresponds to the choice $\nu>-1$ in \eqref{a4b}. When $\nu\in (-2,-1]$, the fact that an infinite number of fragments are produced after breakup alters the situation \cite{LavR15, ViZi89}.

According to the previous discussion, the value $\gamma=\lambda-1>0$ appears to be a borderline case with respect to the occurrence of the gelation phenomenon. The purpose of this paper is to contribute to a better understanding of this particular case for which nothing much seems to be known, except for the particular choice of parameters
$$
\alpha=1\ , \qquad \lambda=2\ , \qquad \gamma=1\ .
$$
In that case, explicit computations performed in \cite{Pisk12b, ViZi89} reveal that gelation is likely to take place when $a_0/(\varrho K_0)$ is suitably small and $\nu>-1$ and a proof that it indeed occurs when $a_0/(\varrho K_0)<\nu+1$ may be found in \cite{BLLxx}. On the opposite, numerical simulations performed in \cite{Pisk12b} indicate that there are mass-conserving solutions when $\nu=0$ and the ratio $a_0/(\varrho K_0)$ is large enough. Still for $\nu=0$, it follows from \cite[Remark~5.2]{DLP17} that there is a stationary solution with total mass $K_0/a_0$. Finally, when $\nu=-1$, the initial value problem  \eqref{a1} has at least one mass-conserving solution whatever the values of $\varrho$, $K_0$, and $a_0$  in \eqref{a3} and \eqref{a4} and the value of $\varrho$ \cite{LavR15}. 

We shall here extend these results to a larger class of coagulation and fragmentation coefficients, showing that mass-conserving solutions exist for a suitable range of the values of the parameters $\lambda$ and $\nu$, provided the ratio $a_0/(\varrho K_0)$ is large enough. More precisely, consider 
\begin{subequations}\label{aaCFC}
\begin{equation}
\lambda \in (1,2]\ , \quad \gamma:= \lambda-1 \in (0,1]\ , \quad \alpha\in \left[ \max\left\{ \frac{1}{2} , \lambda-1 \right\} , \frac{\lambda}{2} \right]\ , \label{aa1}
\end{equation}
and assume that the overall fragmentation rate $a$ and the coagulation kernel $K$ are given by 
\begin{eqnarray}
a(x) & = & a_0 x^{\lambda-1}\ , \qquad x\in (0,\infty)\ , \label{aa2} \\
K(x,y) & = & K_0 \left( x^\alpha y^{\lambda-\alpha} + x^{\lambda-\alpha} y^\alpha \right)\ , \qquad (x,y)\in (0,\infty)^2\ , \label{aa3}
\end{eqnarray}
for some positive constants $a_0$ and $K_0$. We assume further that the daughter distribution function $b$ has the scaling form
\begin{equation}
b(x,y) = \frac{1}{y} B\left( \frac{x}{y} \right)\ , \qquad 0<x<y\ , \label{aa4}
\end{equation}
where
\begin{equation}
B\ge 0 \;\;\text{ a.e. in }\;\; (0,1)\ , \quad B\in L^1((0,1),z\mathrm{d}z)\ , \quad \int_0^1 zB(z)\ \mathrm{d}z =1\ , \label{aa5}
\end{equation}
and there is $\nu\in (-2,0]$ such that
\begin{equation}
\mathfrak{b}_{m,p} := \int_0^1 z^m B(z)^p\ \mathrm{d}z < \infty\ ,   \label{aa6}
\end{equation}
for all $(m,p)\in\mathcal{A}_\nu$, the set $\mathcal{A}_\nu$ being defined by
\begin{equation}
\mathcal{A}_\nu :=\{ (m,p)\in (-1,\infty)\times [1,\infty)\ :\ m+p\nu>-1\}\ . \label{aa7}
\end{equation}
The final assumption connects the small size behaviour of the coagulation kernel $K$ with the possible singularity of $B$ for small sizes. Specifically, we require
\begin{equation}
-\nu-1 < \alpha\ . \label{aa8}
\end{equation}
\end{subequations}

When 
\begin{equation}
B(z) = B_\nu(z) := (\nu+2) z^\nu\ , \qquad z\in (0,1)\ , \;\;\text{ for some }\;\; \nu\in (-2,0]\ , \label{Bnu}
\end{equation} 
the assumptions \eqref{aa5} and \eqref{aa6} are clearly satisfied for all $\nu\in (-2,0]$, while \eqref{aa8} requires $\nu>-\alpha-1$. Let us point out that \eqref{aa8} is implied by \eqref{aa1} when $\nu>-3/2$. 

%%%%%%%%%%%%%%%%
\begin{remark}\label{rema1}
Clearly $(m,1)\in \mathcal{A}_\nu$ for all $m>-\nu-1$, so that $\mathcal{A}_\nu$ is non-empty. We also observe that, if $(m,1)\in\mathcal{A}_\nu$, then $(m,p)$ also belongs to $\mathcal{A}_\nu$ for all $p\in [1,(m+1)/|\nu|)$ when $\nu<0$ and for all $p\in [1,\infty)$ when $\nu=0$. 
\end{remark}
%%%%%%%%%%%%%%%%

An easy consequence of \eqref{aa6} and Remark~\ref{rema1} is that
\begin{equation}
\mathfrak{b}_{\ln} := \int_0^1 z |\ln z| B(z)\ \mathrm{d}z < \infty\ . \label{aa9}
\end{equation}
Indeed, 
\begin{equation*}
\int_0^1 z |\ln z| B(z)\ \mathrm{d}z \le \sup_{z\in (0,1)}\left\{ z^{(2+\nu)/2} |\ln{z}| \right\} \int_0^1 z^{-\nu/2} B(z)\ \mathrm{d}z = \frac{2\mathfrak{b}_{-\nu/2,1}}{e(\nu+2)}\ ,
\end{equation*}
and the right-hand side of the previous inequality is finite since $(-\nu/2,1)\in\mathcal{A}_\nu$. We then set
\begin{equation}
\varrho_\star := \frac{a_0 \mathfrak{b}_{\ln}}{2 K_0 \ln{2}}\ , \label{aa10}
\end{equation}
and define the weighted $L^1$-space $X_m$ and the moment $M_m(h)$ of order $m$ of $h\in X_m$ for $m\in\mathbb{R}$ by
\begin{equation*}
X_m := L^1((0,\infty),x^m\mathrm{d}x)\ , \qquad M_m(h) := \int_0^\infty x^m h(x)\ \mathrm{d}x\ .
\end{equation*}
We also denote the positive cone of $X_m$ by $X_m^+$ while $X_{m,w}$ denotes the space $X_m$ endowed with its weak topology. 

With the above notation, the main contribution of this paper is the existence and uniqueness of mass-conserving weak solutions to \eqref{a1} on $[0,\infty)$ when $\varrho\in (0,\varrho_\star)$.

%%%%%%%%%%%%%%%%
\begin{theorem}\label{ThmP1}
Let $K$, $a$, and $b$ be coagulation and fragmentation coefficients satisfying \eqref{aaCFC} and fix
\begin{equation}
m_0\in (-\nu-1,\alpha) \cap [0,1)\ . \label{aa11}
\end{equation}
Consider a non-negative initial condition $f^{in}\in X_1^+$ such that 
\begin{equation}
\int_0^\infty \left( x^{m_0} + x|\ln x| \right) f^{in}(x)\ \mathrm{d}x < \infty\ , \label{aa12}
\end{equation}
and assume that
\begin{equation}
\varrho := M_1(f^{in}) \in (0,\varrho_\star)\ . \label{a100}
\end{equation}
\begin{itemize} 
\item[(a)] There is a mass-conserving weak solution $f$ to \eqref{a1} on $[0,\infty)$ in the sense of Definition~\ref{DefS1} below, which additionally belongs to $L^1((0,T),X_\lambda)$ for all $T>0$. 
\item[(b)] If $f^{in}\in X_m$ for some $m\in (-\nu-1,m_0)\cup (1+\lambda-\alpha,\infty)$, then $f\in L^\infty((0,T),X_m)$ for all $T>0$,
\item[(c)] If $f^{in}$ belongs to $X_{2\lambda-\alpha}$, then there is a unique mass-conserving weak solution $f$ to \eqref{a1} on $[0,\infty)$ such that $f\in L^\infty((0,T),X_{2\lambda-\alpha})$ for all $T>0$.
\end{itemize}
\end{theorem}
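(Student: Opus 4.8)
The plan is to construct the solution by the standard approximation-and-compactness strategy, carefully tracking the role of the smallness condition $\varrho<\varrho_\star$, which is what ensures mass conservation rather than gelation. First I would introduce a truncated problem on the bounded size interval $(0,n)$: replace $K$ by $K\mathbf{1}_{\{x+y<n\}}$ (or $K\mathbf{1}_{\{x<n,\,y<n\}}$) and restrict fragmentation to sizes below $n$, obtaining a system of equations on $L^1((0,n),x\mathrm{d}x)$ for which well-posedness, non-negativity, and conservation of the first moment $M_1$ are classical (Banach fixed point on a short time interval, then global continuation using the $X_1$ bound). Denote the approximants by $f_n$ and extend them by zero to $(0,\infty)$; they satisfy $M_1(f_n(t))=\varrho$ for all $t\ge0$.

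\medskip

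The heart of the argument is a collection of \emph{uniform} moment estimates for $(f_n)$. I would establish: (i) propagation of the $X_{m_0}$-moment using the assumption $m_0<\alpha$ together with $m_0\in[0,1)$, where the key point is that for $m\in[0,1)$ the coagulation term has a favourable sign in the $M_m$ identity (superadditivity fails for $x\mapsto x^m$, so the coagulation contribution is nonpositive) while the fragmentation gain term is controlled by $\mathfrak{b}_{m,1}<\infty$, which holds because $m>-\nu-1$ puts $(m,1)\in\mathcal{A}_\nu$; (ii) a uniform $L^1((0,T),X_\lambda)$ bound, obtained by testing the equation against $x^{m_0}$ or by a duality/weak-compactness argument — here the balanced-growth structure $\gamma=\lambda-1$ and $\lambda-\alpha\le\alpha$ is essential, since it makes the fragmentation loss term $a_0 x^{\lambda-1}\cdot x = a_0 x^\lambda$ exactly absorb the $X_\lambda$-type growth coming from coagulation, and the smallness $\varrho<\varrho_\star$ guarantees the coefficient is of the right sign; (iii) propagation of $x|\ln x|$-weighted moments, needed to control small sizes and to eventually prove mass conservation. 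I would also prove a uniform-integrability estimate (de la Vallée–Poussin / superlinear weight argument) so that $(f_n(t))$ is weakly relatively compact in $X_1$.

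\medskip

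With these bounds in hand, I would extract (via a refinement of the Arzelà–Ascoli / Dunford–Pettis argument, e.g.\ the method of \cite{Stew89} or \cite{LaMi02b}) a subsequence converging in $C([0,T],X_{1,w})$ — and in fact in $C([0,T],X_{m,w})$ for the relevant $m$ — to some $f$, pass to the limit in the weak formulation using the $L^1((0,T),X_\lambda)$ bound to handle the quadratic coagulation term and the dominated-convergence/uniform-integrability estimates for the fragmentation term, and verify that $f$ is a weak solution in the sense of Definition~\ref{DefS1}. Mass conservation $M_1(f(t))=\varrho$ then follows by showing no mass is lost at $x=\infty$ (the $L^1((0,T),X_\lambda)$ bound plus $\lambda>1$ controls the tail flux, and here is precisely where $\varrho<\varrho_\star$ is used quantitatively) nor at $x=0$ (the $x|\ln x|$ moment controls the small-size behaviour, ruling out shattering since $\nu\le 0$ means $a$ does not blow up at $0$). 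Part (b) is a direct consequence of the uniform moment estimates applied for the given $m$: for $m\in(-\nu-1,m_0)$ one repeats the $X_{m_0}$ argument, and for $m>1+\lambda-\alpha$ the coagulation term is positive but controlled by lower moments via an interpolation, while fragmentation again provides a damping term, yielding an $L^\infty_{\mathrm{loc}}(X_m)$ bound. For part (c), with the extra moment $f^{in}\in X_{2\lambda-\alpha}$, I would propagate the $X_{2\lambda-\alpha}$ bound (this is the natural space where the coagulation and fragmentation bilinear forms are both well-defined and Lipschitz on bounded sets) and run a Gronwall estimate on the difference of two such solutions in $X_1$ (or in $X_\lambda$), using the $X_{2\lambda-\alpha}$ bound to close the nonlinear coagulation terms; uniqueness follows.

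\medskip

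The main obstacle I anticipate is item (ii)–(iii) together with the mass-conservation verification: establishing the uniform $L^1((0,T),X_\lambda)$ estimate with a constant depending only on $\varrho$ (and not on $n$), and showing that this estimate is quantitatively strong enough — given the precise threshold $\varrho_\star=a_0\mathfrak{b}_{\ln}/(2K_0\ln 2)$ — to prevent any tail escape of mass. The appearance of $\ln 2$ and $\mathfrak{b}_{\ln}$ strongly suggests the correct device is to test the approximate equation against the weight $x\ln x$ (or $x\ln(x/\bar{x})$ for a well-chosen scale $\bar x$): the coagulation term then produces a term involving $\int\int K(x,y)f(x)f(y)[\,(x+y)\ln(x+y)-x\ln x-y\ln y\,]$, whose sign and size are governed by the convexity of $x\ln x$ and the homogeneity-$\lambda$ structure of $K$, while the fragmentation term yields exactly $-a_0\int x^\lambda f(x)\,\mathrm{d}x$ times something, plus a gain term producing the constant $\mathfrak{b}_{\ln}$; balancing the two and using $M_1(f_n)=\varrho$ isolates the condition $\varrho<\varrho_\star$ as exactly the regime in which this logarithmic moment stays bounded, which in turn yields the $X_\lambda$ integrability and the no-gelation conclusion. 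Getting all the constants to line up with \eqref{aa10} — in particular the factor $2K_0\ln 2$ coming from $(x+y)\ln(x+y)\ge (x\ln x + y\ln y) + (x+y)\ln 2$ when $x=y$, combined with the symmetric form of $K$ — is the delicate computation at the core of the paper.
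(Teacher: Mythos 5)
Your proposal follows essentially the same route as the paper: truncation, the key a priori estimate obtained by testing against $x\ln x$ (via $(x+y)\ln(x+y)\le x\ln x+y\ln y+2\ln 2\sqrt{xy}$ combined with $K(x,y)\le K_0\sqrt{xy}\,(x^{\lambda-1}+y^{\lambda-1})$), which simultaneously yields the $L^1((0,T),X_\lambda)$ bound, the tail control needed for mass conservation, and exactly the threshold $\varrho_\star$, followed by de la Vall\'ee--Poussin uniform integrability, time equicontinuity and Arzel\`a--Ascoli in the weak topology, and a Gronwall argument for uniqueness under a propagated $X_{2\lambda-\alpha}$ bound. The only substantive detail to adjust is the norm in the uniqueness step: neither $X_1$ nor $X_\lambda$ alone closes the Gronwall inequality, because the factor $y^\alpha$ in $K$ and the fragmentation gain term are not dominated by $y$ or $y^\lambda$ for small $y$, so the paper runs the contraction estimate in the composite weighted space with weight $W(x)=x^\alpha+x^\lambda$.
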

%%%%%%%%%%%%%%%%

The proof of Theorem~\ref{ThmP1}~(a)-(b) relies on a weak compactness method in a weighted $L^1$-space, an approach pioneered in \cite{Stew89} and further developed in \cite{ELMP03, EMRR05, GKW11, GLW12, Laur00, Laur18, LaMi02b}. The starting point of the analysis is to construct a sequence of approximations of \eqref{a1} for which the existence of a solution follows by an application of Banach fixed point theorem. To guarantee the $L^1$-weak compactness of the sequence of approximating solutions thus obtained, several estimates are to be derived. In particular, the behaviour of the sequence of the approximating solutions for small and large sizes has to be controlled and the balance between the homogeneity of the coagulation kernel $K$ and the overall fragmentation rate $a$ is the main difficulty to be overcome here. In fact, special care has to be paid to the control for large sizes and this is where the smallness condition on the total mass $\varrho$ comes into play. The key idea is here to obtain joint estimates in $L^1((0,\infty),x|\ln x|\mathrm{d}x)$ and in $X_{m_0}$ when the total mass of the initial condition does not exceed $\varrho_\star$. Uniform integrability and time equicontinuity are next derived, the former relying on the monotonicity properties of the coagulation kernel \cite{Buro83, Dubo94b, LaMi02c, MiRR03}, and complete the proof of the weak compactness of the sequence of approximating solutions. Passing to the limit as the approximating parameter diverges to infinity is then done along the lines of the above mentioned references. The uniqueness statement in Theorem~\ref{ThmP1}~(c) is proved along the lines of \cite[Theorem~2.9]{EMRR05}.

%%%%%%%%%%%%%%%%
\begin{remark}\label{rema2}
Note that the value $\varrho_\star$ defined in \eqref{aa10} is finite whatever the value of $\nu$. On the one hand, that mass-conserving solutions cannot exist when $\varrho$ exceeds a positive threshold value $\varrho_{th}$ is supported by the occurrence of gelation for large values of $\varrho$ when $\lambda=2$, $\alpha=1$, and $B=B_\nu$ with $\nu\in (-1,0]$, see \eqref{Bnu}, \cite{BLLxx}. However, it is likely that $\varrho_{th}>\varrho_\star$: for instance, it is expected that $\varrho_{th}=a_0/K_0$ when $\alpha=1$, $\lambda=2$, and $B=B_0$ ($\nu=0$), and $\varrho_\star= a_0/(2K_0)$ in that case. On the other hand, the analysis performed in \cite{LavR15} reveals that gelation does not take place when $\lambda=2$, $\alpha=1$, and $B=B_{-1}$ ($\nu=-1$), so that the outcome of Theorem~\ref{ThmP1}~(a) is far from being optimal in that case. However, being valid for all values of $\nu$ in $(-2,0]$, the proof of Theorem~\ref{ThmP1} does not exploit the specific non-integrability features of $B_\nu$ when $\nu\in (-2,-1]$ and a more refined analysis seems to be required. We hope to return to that problem in the near future.
\end{remark}
%%%%%%%%%%%%%%%%
 
%%%%%%%%%%%%%%%%
%%%%%%%%%%%%%%%%
\section{Weak Solutions: Existence} \label{sec2}
%%%%%%%%%%%%%%%%
%%%%%%%%%%%%%%%%

In this section, we assume that $K$, $a$, and $b$ are coagulation and fragmentation coefficients satisfying \eqref{aaCFC} and that the initial condition $f^{in}$ belongs to $X_1$ and possesses the integrability properties \eqref{aa12} and \eqref{a100} with $m_0$ satisfying \eqref{aa11}. We also fix 
\begin{equation}
m_1 \in [m_0,1) \cap [2-\lambda,1) \label{aa11b}
\end{equation}
and a positive real number $\sigma>0$ satisfying
\begin{equation}
\sigma \ge M_{m_0}(f^{in}) + M_1(f^{in}) + \frac{3}{e (1-m_1)} M_{m_1}(f^{in}) +  \int_0^\infty x|\ln x| f^{in}(x)\ \mathrm{d}x\ . \label{b00}
\end{equation}

Throughout this section, $C$ and $(C_i)_{i\ge 1}$ are positive constants depending only on $\lambda$, $\alpha$, $K_0$, $a_0$, $B$, $\nu$,  $m_0$, $m_1$, $\varrho$, and $\sigma$. Dependence upon additional parameters is indicated explicitly.

\newcounter{NumConst}

%%%%%%%%%%%%%%%%
%%%%%%%%%%%%%%%%
\subsection{Weak Solutions} \label{sec2.1}
%%%%%%%%%%%%%%%%
%%%%%%%%%%%%%%%%

We begin this section with the definition of a weak solution to \eqref{a1} which is slightly different from the usual one in order to account for the possible non-integrability of the daughter distribution $b$ \cite{BLLxx}. It involves the spaces of test functions $\Theta_m$, $m\in [0,1)$, which are defined by $\Theta_0:= L^\infty(0,\infty)$ and 
\begin{equation*}
\Theta_m := \left\{ \vartheta\in C^{0,m}([0,\infty))\cap L^\infty(0,\infty)\ :\ \vartheta(0) = 0 \right\}\ .
\end{equation*} 

%%%%%%%%%%%%%%%%
\begin{definition}\label{DefS1}
Let $K$, $a$, and $b$ be coagulation and fragmentation coefficients satisfying \eqref{aaCFC} and consider $(m_0,m_1)$ satisfying \eqref{aa11} and \eqref{aa11b} and a non-negative function $f^{in}\in X_1^+\cap X_{m_0}$. For $T\in (0,\infty]$, a weak solution to \eqref{a1} on $[0,T)$ is a non-negative function 
\begin{equation*}
f\in C([0,T),X_{m_1,w})\cap L^\infty((0,T),X_{m_0}) \cap L^\infty((0,T),X_1)
\end{equation*} 
such that, for all $t\in (0,T)$ and $\vartheta\in\Theta_{m_1}$, 
\begin{align}
\int_0^\infty (f(t,x)-f^{in}(x)) \vartheta(x)\ \mathrm{d}x & = \frac{1}{2} \int_0^t \int_0^\infty \int_0^\infty K(x,y) \chi_\vartheta(x,y) f(s,x) f(s,y)\ \mathrm{d}y\mathrm{d}x\mathrm{d}s \nonumber\\
& \qquad - \int_0^t \int_0^\infty a(x) N_\vartheta(x) f(s,x)\ \mathrm{d}x\mathrm{d}s\ , \label{b0}
\end{align}
where
\begin{align}
\chi_\vartheta(x,y) & := \vartheta(x+y) -\vartheta(x) - \vartheta(y)\ , \qquad (x,y)\in (0,\infty)^2\ , \label{bchi}\\
N_\vartheta(y) & := \vartheta(y) - \int_0^y \vartheta(x) b(x,y)\ \mathrm{d}x\ , \qquad y>0\ . \label{bN}
\end{align}

In addition, $f$ is a mass-conserving weak solution to \eqref{a1} on $[0,T)$ if $f\in C([0,T),X_{1,w})$ and $M_1(f(t))=M_1(f^{in})$ for all $t\in [0,T)$.
\end{definition}
%%%%%%%%%%%%%%%%

The existence of weak solutions and mass-conserving weak solutions to coagulation-fragmentation equations has been investigated for various classes of coagulation and fragmentation coefficients, see \cite{BaCa90, BaLa11, BaLa12a, BLL13, BLLxx, daCo95, Dubo94b, ELMP03, EMRR05, GKW11,  GLW12, Laur00, Laur18, LaMi02b, McLe62, Melz57b, Spou84, Stew89, Whit80} and the references therein. As for the uniqueness issue, we refer to \cite{BaCa90, BaLa11, BaLa12a, BLL13, BLLxx, daCo95, EMRR05, Giri13, GiWa11, LaMi04, Norr99, Stew90b} and the references therein.

%%%%%%%%%%%%%%%%
%%%%%%%%%%%%%%%%
\subsection{Approximation} \label{sec2.2}
%%%%%%%%%%%%%%%%
%%%%%%%%%%%%%%%%

We fix an initial condition $f^{in}\in X_1^+$ satisfying \eqref{aa12} and \eqref{a100} and recall that $\varrho=M_1(f^{in})$. Let $j\ge 2$ be an integer and define
\begin{align}
K_j(x,y) & := K(x,y) \mathbf{1}_{(0,j)}(x) \mathbf{1}_{(0,j)}(y)\ , \qquad (x,y)\in (0,\infty)^2\ , \label{b1} \\
a_j(x) & := a(x) \mathbf{1}_{(0,j)}(x)\ , \qquad x\in (0,\infty)\ , \label{b2}
\end{align}
and
\begin{equation}
f_j^{in}(x) := f^{in}(x) \mathbf{1}_{(0,j)}(x)\ , \qquad x\in (0,\infty)\ . \label{b3}
\end{equation}
Denoting the coagulation and fragmentation operators with $K_j$ and $a_j$ instead of $K$ and $a$ by $\mathcal{C}_j$ and $\mathcal{F}_j$, respectively, it follows from \eqref{aaCFC}, \eqref{b1}, and \eqref{b2} by a classical Banach fixed point argument that there is a unique non-negative function
\begin{equation*}
f_j\in C^1([0,\infty),L^1((0,j),x^{m_0}\mathrm{d}x))
\end{equation*}
which solves
\begin{subequations}\label{b4}
\begin{align}
\partial_t f_j(t,x) & = \mathcal{C}_j f_j(t,x) + \mathcal{F}_j f_j(t,x)\ , \qquad (t,x) \in (0,\infty)\times (0,j)\ , \label{b4a} \\
f_j(0,x) & = f_j^{in}(x)\ , \qquad x\in (0,j)\ . \label{b4b}
\end{align}
\end{subequations}
We extend $f_j$ to $[0,\infty)\times (0,\infty)$ by setting $f_j(t,x)=0$ for $t\ge 0$ and $x>j$. It readily follows from \eqref{b4a} that, for $t>0$ and $\vartheta\in C^{m_0}([0,2j])$ satisfying $\vartheta(0)=0$, 
\begin{align}
\frac{\mathrm{d}}{\mathrm{d}t} \int_0^j \vartheta(x) f_j(t,x)\ \mathrm{d}x & = \frac{1}{2} \int_0^j \int_0^j K(x,y) \chi_\vartheta(x,y) f_j(t,x) f_j(t,y)\ \mathrm{d}y\mathrm{d}x \nonumber\\
& \qquad - \int_0^j a(x) N_\vartheta(x) f_j(t,x)\ \mathrm{d}x \label{b5} \\
& \qquad - \frac{1}{2} \int_0^j \int_{j-y}^j K(x,y) \vartheta(x+y) f_j(t,x) f_j(t,y)\ \mathrm{d}x\mathrm{d}y\ . \nonumber
\end{align}
Choosing $\vartheta(x)=x$, $x\in (0,2j)$, in \eqref{b5} gives
\begin{equation*}
\frac{\mathrm{d}}{\mathrm{d}t} \int_0^j x f_j(t,x)\ \mathrm{d}x = - \frac{1}{2} \int_0^j \int_{j-y}^j (x+y) K(x,y) f_j(t,x) f_j(t,y)\ \mathrm{d}y\mathrm{d}x\ , 
\end{equation*}
hence, thanks to the symmetry of $K$,
\begin{equation*}
\frac{\mathrm{d}}{\mathrm{d}t} \int_0^j x f_j(t,x)\ \mathrm{d}x = - \int_0^j \int_{j-y}^j x K(x,y) f_j(t,x) f_j(t,y)\ \mathrm{d}y\mathrm{d}x\ .
\end{equation*}
After integration with respect to time, we find
\begin{equation*}
\int_0^j x f_j(t,x)\ \mathrm{d}x = \int_0^j x f_j^{in}(x)\ \mathrm{d}x - \int_0^t \int_0^j \int_{j-y}^j x K(x,y) f_j(s,x) f_j(s,y)\ \mathrm{d}y\mathrm{d}x\mathrm{d}s\ .
\end{equation*}
Recalling that $f_j(t,x)=0$ for $t\ge 0$ and $x\in (j,\infty)$, the previous identity becomes
\begin{equation}
M_1(f_j(t)) = M_1(f_j^{in}) - \int_0^t \int_0^j \int_{j-y}^j x K(x,y) f_j(s,x) f_j(s,y)\ \mathrm{d}y\mathrm{d}x\mathrm{d}s\ , \qquad t\ge 0\ , \label{b6}
\end{equation}
and we infer from \eqref{b3}, \eqref{b6}, and the non-negativity of $K$ and $f_j$ that 
\begin{equation}
M_1(f_j(t)) \le \varrho = M_1(f^{in})\ , \qquad t\ge 0\ . \label{b7} 
\end{equation}

%%%%%%%%%%%%%%%%
%%%%%%%%%%%%%%%%
\subsection{Moment Estimates} \label{sec2.3}
%%%%%%%%%%%%%%%%
%%%%%%%%%%%%%%%%

Let us first draw a couple of consequences of \eqref{aaCFC}.

%%%%%%%%%%%%%%%%
\begin{lemma}\label{Lemb1}
Let $K$, $a$, and $b$ be coagulation and fragmentation coefficients satisfying \eqref{aaCFC}. Then
\begin{equation}
\frac{1}{2} \le \alpha \le \frac{\lambda}{2} \le \lambda-\alpha \le 1\ , \label{b8}
\end{equation}
\begin{equation}
2K_0 (xy)^{\lambda/2} \le K(x,y) \le K_0 \sqrt{xy} \left( x^{\lambda-1} + y^{\lambda-1} \right)\ , \qquad (x,y)\in (0,\infty)^2\ . \label{b9}
\end{equation}
\end{lemma}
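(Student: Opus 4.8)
The statement to prove is Lemma~\ref{Lemb1}, which consists of the chain of inequalities \eqref{b8} and the two-sided bound \eqref{b9} on the coagulation kernel. Both are elementary consequences of the structural assumptions \eqref{aa1} on the exponents and the explicit form \eqref{aa3} of $K$.

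\medskip

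First I would establish \eqref{b8}. The hypothesis \eqref{aa1} states $\lambda\in(1,2]$ and $\alpha\in[\max\{1/2,\lambda-1\},\lambda/2]$. The bound $1/2\le\alpha$ is immediate from $\alpha\ge\max\{1/2,\lambda-1\}\ge 1/2$, and $\alpha\le\lambda/2$ is part of the assumption. For the inequality $\lambda/2\le\lambda-\alpha$, I would rewrite it as $\alpha\le\lambda/2$, which is again the upper bound on $\alpha$. Finally, $\lambda-\alpha\le 1$ is equivalent to $\alpha\ge\lambda-1$, which holds because $\alpha\ge\max\{1/2,\lambda-1\}\ge\lambda-1$. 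This settles \eqref{b8}.

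\medskip

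Next I would prove \eqref{b9}. For the lower bound, write $K(x,y)=K_0(x^\alpha y^{\lambda-\alpha}+x^{\lambda-\alpha}y^\alpha)$ and apply the arithmetic--geometric mean inequality to the two terms: $x^\alpha y^{\lambda-\alpha}+x^{\lambda-\alpha}y^\alpha\ge 2\sqrt{x^\alpha y^{\lambda-\alpha}x^{\lambda-\alpha}y^\alpha}=2(xy)^{\lambda/2}$, which gives $K(x,y)\ge 2K_0(xy)^{\lambda/2}$. For the upper bound, I would use \eqref{b8}, in particular $1/2\le\alpha$ and $\lambda-\alpha\le 1$, hence also (by \eqref{b8}) $1/2\le\lambda-\alpha$ and $\alpha\le 1$, so that both exponents $\alpha$ and $\lambda-\alpha$ lie in $[1/2,1]$ and their sum is $\lambda$. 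Then
\begin{equation*}
x^\alpha y^{\lambda-\alpha} = (xy)^{1/2}\, x^{\alpha-1/2}\, y^{\lambda-\alpha-1/2}\ ,
\end{equation*}
and since $\alpha-1/2\in[0,1/2]$ and $\lambda-\alpha-1/2\in[0,1/2]$ with $(\alpha-1/2)+(\lambda-\alpha-1/2)=\lambda-1$, I would bound $x^{\alpha-1/2}y^{\lambda-\alpha-1/2}\le\max\{x,y\}^{\lambda-1}\le x^{\lambda-1}+y^{\lambda-1}$; symmetrically $x^{\lambda-\alpha}y^\alpha\le(xy)^{1/2}(x^{\lambda-1}+y^{\lambda-1})$. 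Adding the two estimates yields $K(x,y)\le 2K_0\sqrt{xy}(x^{\lambda-1}+y^{\lambda-1})$, which is slightly weaker than \eqref{b9}; to recover the stated constant $K_0$ rather than $2K_0$, I would instead argue more carefully that for each term one of the two factors $x^{\lambda-1}$, $y^{\lambda-1}$ suffices, namely $x^{\alpha-1/2}y^{\lambda-\alpha-1/2}\le x^{\lambda-1}$ if $x\ge y$ and $\le y^{\lambda-1}$ if $x\le y$, and similarly for the other term, so that the sum of the two terms is bounded by $\sqrt{xy}(x^{\lambda-1}+y^{\lambda-1})$ after distinguishing the cases $x\ge y$ and $x\le y$ and checking that in each case exactly one of $x^{\lambda-1}$, $y^{\lambda-1}$ dominates each of the two monomials.

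\medskip

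This lemma is purely computational and I do not anticipate any genuine obstacle; the only point requiring a little care is getting the sharp constant $K_0$ (not $2K_0$) in the upper bound of \eqref{b9}, which is handled by the case distinction $x\le y$ versus $x\ge y$ described above. The chain \eqref{b8} is just an unpacking of the constraints in \eqref{aa1}, and the lower bound in \eqref{b9} is a one-line AM--GM estimate.
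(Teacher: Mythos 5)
Your handling of \eqref{b8} and of the lower bound in \eqref{b9} is correct and is exactly the paper's argument: \eqref{b8} is an unpacking of \eqref{aa1}, and the lower bound is the two-term AM--GM (the paper phrases it as Cauchy--Schwarz). The gap is in the last step of the upper bound. Write $x^\alpha y^{\lambda-\alpha}=(xy)^{1/2}x^{a}y^{b}$ and $x^{\lambda-\alpha}y^{\alpha}=(xy)^{1/2}x^{b}y^{a}$ with $a:=\alpha-1/2\ge 0$, $b:=\lambda-\alpha-1/2\ge 0$, $a+b=\lambda-1$, as you do. Your proposed case distinction does not recover the constant $K_0$: when $x\ge y$, \emph{both} monomials $x^ay^b$ and $x^by^a$ are dominated by $x^{\lambda-1}$, and neither is in general dominated by $y^{\lambda-1}$ (for $a>0$, $x^ay^b\le y^{a+b}$ forces $x\le y$). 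So the case analysis only yields $x^ay^b+x^by^a\le 2\max\{x,y\}^{\lambda-1}$, i.e.\ the same factor $2$ you were trying to eliminate; it is not true that "exactly one of $x^{\lambda-1}$, $y^{\lambda-1}$ dominates each of the two monomials" with the two monomials assigned to different bounds.

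The inequality you actually need, $x^ay^b+x^by^a\le x^{a+b}+y^{a+b}$, is true, but it requires a different one-line justification. Either observe the factorization $x^{a+b}+y^{a+b}-x^ay^b-x^by^a=(x^a-y^a)(x^b-y^b)\ge 0$, the two factors having the same sign; or do what the paper does, namely apply the weighted Young (weighted AM--GM) inequality $u^{\theta}v^{1-\theta}\le\theta u+(1-\theta)v$ with $u=x^{\lambda-1}$, $v=y^{\lambda-1}$ and $\theta=(2\alpha-1)/(2(\lambda-1))$ to $x^ay^b$ (and symmetrically to $x^by^a$); the hypothesis $1/2\le\alpha\le\lambda/2$ guarantees $\theta\in[0,1]$, and the resulting coefficients of $x^{\lambda-1}$ and of $y^{\lambda-1}$ each sum to exactly $1$, giving \eqref{b9} with the stated constant $K_0$.
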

%%%%%%%%%%%%%%%%

\begin{proof}
The first assertion \eqref{b8} readily follows from \eqref{aa1}. Consider next $(x,y)\in (0,\infty)^2$. On the one hand, it follows from the  Cauchy-Schwarz inequality that 
\begin{equation*}
(xy)^{\lambda/2} = x^{\alpha/2} y^{(\lambda-\alpha)/2} x^{(\lambda-\alpha)/2} y^{\alpha/2} \le \frac{1}{2} \left( x^\alpha y^{\lambda-\alpha} + x^{\lambda-\alpha} y^\alpha \right) = \frac{K(x,y)}{2K_0}\ .
\end{equation*}
On the other hand, using Young's inequality gives
\begin{align*}
K(x,y) & = K_0 \sqrt{xy} \left( x^{(2\alpha-1)/2} y^{(2\lambda-2\alpha-1)/2} +  x^{(2\lambda-2\alpha-1)/2} y^{(2\alpha-1)/2} \right) \\
& \le K_0 \sqrt{xy} \left( \frac{2\alpha-1}{2(\lambda-1)} x^{\lambda-1} + \frac{2\lambda-2\alpha-1}{2(\lambda-1)} y^{\lambda-1} + \frac{2\lambda-2\alpha-1}{2(\lambda-1)} x^{\lambda-1} + \frac{2\alpha-1}{2(\lambda-1)} y^{\lambda-1} \right) \\
& = K_0 \sqrt{xy} \left( x^{\lambda-1} + y^{\lambda-1} \right)\ ,
\end{align*}
and the proof of \eqref{b9} is complete. 
\end{proof}

The next result is at the heart of the control of solutions to \eqref{a1} for large sizes.

%%%%%%%%%%%%%%%%
\begin{lemma}\label{Lemb2}
For $(x,y)\in (0,\infty)^2$, 
$$
(x+y) \ln(x+y) \le x \ln x + y \ln y + 2\ln 2 \sqrt{xy}\ .
$$
\end{lemma}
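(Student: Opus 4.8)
The plan is to reduce the inequality to a statement about the function $g(t) := t\ln t$ and its convexity, combined with a sharp estimate for the "defect" term. First I would observe that both sides are positively homogeneous of degree one: replacing $(x,y)$ by $(tx,ty)$ multiplies the claimed inequality by $t$ (the extra $\ln t$ contributions cancel because $x+y = $ sum of the pieces and $\int$-type bookkeeping is linear). Hence it suffices to prove the inequality on the slice $x+y=1$, i.e. to show that for $x\in(0,1)$ and $y=1-x$,
\begin{equation*}
0 \le x\ln x + (1-x)\ln(1-x) + 2\ln 2\,\sqrt{x(1-x)}\ ,
\end{equation*}
since $(x+y)\ln(x+y)=1\cdot\ln 1 =0$. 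Write $\varphi(x) := x\ln x + (1-x)\ln(1-x) + 2\ln2\,\sqrt{x(1-x)}$ on $[0,1]$, with $\varphi(0)=\varphi(1)=0$ by continuity.

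Next I would analyze $\varphi$. It is symmetric about $x=1/2$, where $\varphi(1/2) = 2\cdot\frac12\ln\frac12 + 2\ln2\cdot\frac12 = -\ln2+\ln2 = 0$. So $\varphi$ vanishes at $0,\tfrac12,1$, and the claim is that $\varphi\ge 0$ on all of $[0,1]$. The clean way is to study the sign of $\varphi$: compute $\varphi'(x) = \ln x - \ln(1-x) + \ln2\cdot\frac{1-2x}{\sqrt{x(1-x)}}$, which is antisymmetric about $x=1/2$. On $(0,1/2)$ one needs $\varphi'\le 0$ so that $\varphi$ decreases from $0$ down to $\varphi(1/2)=0$, forcing $\varphi\ge 0$ there; by symmetry the same holds on $(1/2,1)$. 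Thus the core analytic task is the inequality
\begin{equation*}
\ln\frac{1-x}{x} \ge \ln 2\cdot\frac{1-2x}{\sqrt{x(1-x)}}\ , \qquad x\in(0,\tfrac12)\ ,
\end{equation*}
equivalently, after the substitution $x = \tfrac{1-u}{2}$ with $u\in(0,1)$ (so $1-2x=u$, $x(1-x)=\tfrac{1-u^2}{4}$), the one-variable inequality $\ln\frac{1+u}{1-u} \ge \ln 2\cdot\frac{2u}{\sqrt{1-u^2}}$ for $u\in(0,1)$. This I would settle by comparing Taylor/series expansions: the left side is $2(u + u^3/3 + u^5/5 + \cdots)$, while $\frac{2u}{\sqrt{1-u^2}} = 2u(1 + \tfrac12 u^2 + \tfrac38 u^4 + \cdots)$, and one checks the coefficient-wise domination after multiplying by $\ln 2 < 1$ — indeed $2u\cdot 1 \le 2u$ trivially and the point is that $\ln 2$ times the (rapidly growing) binomial coefficients of $(1-u^2)^{-1/2}$ stays below the (slowly growing) coefficients $2/(2k+1)$ of the log, with a margin; alternatively, differentiate the difference and check monotonicity, or simply verify that $h(u):=\sqrt{1-u^2}\,\ln\frac{1+u}{1-u} - 2u\ln2$ satisfies $h(0)=0$, $h(1^-)=-2\ln2<0$ is irrelevant since we want $h\ge 0$ only after dividing — wait, here I must be careful about the sign of $\sqrt{1-u^2}$ versus the rearrangement; since $\sqrt{1-u^2}>0$ the division is legitimate and I would instead show $h(u)\ge 0$ fails near $u=1$, so the series/convexity route is the safe one.

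The main obstacle is exactly this last elementary but slightly delicate one-variable inequality near the endpoint: the reformulated claim $\frac{1}{2\ln 2}\ln\frac{1+u}{1-u}\ge \frac{u}{\sqrt{1-u^2}}$ is \emph{sharp only at $u=0$} (both sides $\sim u$ there, with derivative ratio exactly $1/\ln 2 > 1$ in favor of the left), and near $u=1$ the left side blows up like $-\tfrac{1}{2\ln2}\ln(1-u)$ while the right blows up like $\tfrac{1}{\sqrt2}(1-u)^{-1/2}$, so the right-hand side eventually dominates — meaning the inequality $\varphi'\le 0$ on $(0,1/2)$ is actually \emph{false} near $x=0$, and $\varphi$ is not monotone. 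So the plan must be corrected: I would instead prove $\varphi\ge 0$ directly by a global argument — e.g. show $\varphi$ is concave on $(0,1/2)$ is also false; the honest route is to bound $x\ln x + (1-x)\ln(1-x) \ge -\ln 2\cdot 2\sqrt{x(1-x)}$ via the known inequality $-\big(x\ln x+(1-x)\ln(1-x)\big) = H(x) \le \ln 2\cdot 2\sqrt{x(1-x)}$ for the binary entropy $H$, which is a standard sharp bound (entropy is bounded above by $\ln 2$ and, more precisely, $H(x)\le \ln2\cdot(2\sqrt{x(1-x)})$ holds because both sides are symmetric, vanish at $0,1$, equal $\ln 2$ at $1/2$, and $H$ is concave while $2\sqrt{x(1-x)}$ dominates any chord–type concave competitor with the same three values after checking the second derivative at $1/2$). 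Thus the final structure is: homogeneity reduction $\Rightarrow$ equivalent to the entropy bound $H(x)\le 2\ln2\,\sqrt{x(1-x)}$ on $[0,1]$ $\Rightarrow$ prove that by comparing $H$ with $2\ln2\sqrt{x(1-x)}$ using symmetry, the common values at $0,\tfrac12,1$, and a convexity/second-derivative comparison of $G(x):=2\ln2\sqrt{x(1-x)} - H(x)$, whose sign I would pin down by showing $G$ has no interior minimum below $0$ (checking $G''$ changes sign at most twice and $G\ge 0$ at the critical points). That convexity bookkeeping near $x=0$, where $H'\to+\infty$ and $(\sqrt{x(1-x)})'\to+\infty$ at comparable rates, is the step I expect to require the most care.
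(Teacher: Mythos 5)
Your final route is correct and genuinely different from the paper's. The paper fixes $y$ and works with $J(x):=(x+y)\ln(x+y)-x\ln x-y\ln y-2\ln 2\sqrt{xy}$ directly, computing $J'(x)=\ln(1+y/x)-\ln 2\sqrt{y/x}$ and reading off the sign pattern of $r\mapsto\ln(1+r)-\ln 2\sqrt{r}$ (negative on $(0,1)$, positive on $(1,r_0)$, negative beyond), so that $J$ attains its supremum, namely $0$, at $x\to 0^+$ and at $x=y$. You instead use the degree-one homogeneity of the difference (valid: the $\ln t$ contributions cancel since $(x+y)\ln t=x\ln t+y\ln t$) to reduce to the binary entropy bound $H(x)\le 2\ln 2\,\sqrt{x(1-x)}$ on $[0,1]$, proved by an inflection-point analysis of $G(x):=2\ln 2\sqrt{x(1-x)}-H(x)$. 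This buys a reduction to a recognizable standard inequality; the paper's version buys a shorter computation. Your plan does close, and more easily than you fear: one finds $G''(x)=\frac{1}{x(1-x)}-\frac{\ln 2}{2\left(x(1-x)\right)^{3/2}}$, so $G$ is convex exactly where $x(1-x)>(\ln 2)^2/4$, i.e. on an interval $(x_-,x_+)$ containing $1/2$; convexity together with $G(1/2)=G'(1/2)=0$ gives $G\ge 0$ there, and on the two outer intervals $G$ is concave with nonnegative endpoint values ($G(0)=G(1)=0$ and $G(x_\pm)\ge 0$), hence nonnegative. So the endpoint bookkeeping you single out as the delicate step is in fact the easy part: concavity works in your favor near $0$ and $1$.

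Two cautions for the write-up. First, the intermediate claim that proving $\varphi'\le 0$ on $(0,1/2)$ would "force $\varphi\ge 0$" is backwards: a function decreasing from $\varphi(0)=0$ is $\le 0$, and since $\varphi(0)=\varphi(1/2)=0$ monotonicity would force $\varphi\equiv 0$. You rightly abandon this line, but the detour (and the half-finished discussion of $h$) should not survive into the final proof. Second, "checking $G''$ changes sign at most twice and $G\ge 0$ at the critical points" is only a sketch as stated; the explicit formula for $G''$ above is what turns it into a proof and must appear.
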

%%%%%%%%%%%%%%%%

\begin{proof}
Fix $y>0$ and define 
$$
J(x) := (x+y) \ln(x+y) - x \ln x - y \ln y - 2\ln 2 \sqrt{xy}\ , \qquad x>0\ .
$$
Then $J'(x)=J_1(y/x)$ for $x>0$ with $J_1(r):= \ln(1+r) - \ln 2 \sqrt{r}$ for $r>0$. Then $J_1(1)=0$ and studying the variation of $J_1$ reveals that there is $r_0>1$ such that $J_1<0$ in $(0,1)\cup (r_0,\infty)$ and $J_1>0$ in $(1,r_0)$. Consequently, $J$ reaches its maximum twice, at $x=0$ and $x=y$, from which the non-positivity of $J$ on $(0,\infty)$ follows.
\end{proof}

After this preparation, we are in a position to state and prove the first estimates for $f_j$ for small and large sizes.

%%%%%%%%%%%%%%%%
\begin{lemma}\label{Lemb3}
\refstepcounter{NumConst}\label{cst1}
Let $m\in [m_1,1)$ and set $\delta_\varrho := K_0 \ln{2} (\varrho_\star-\varrho)>0$. There is $C_{\ref{cst1}}(m)>0$ depending on $m$ such that, for $t\ge 0$, 
\begin{align*}
& \int_0^\infty x|\ln{x}| f_j(t,x)\ \mathrm{d}x + \frac{1}{e (1-m)} M_m(f_j(t)) + \delta_\varrho \int_0^t M_\lambda(f_j(s))\ \mathrm{d}s \\
& \qquad + \ln{j} \int_0^t \int_0^j \int_{j-y}^j x K(x,y) f_j(s,x) f_j(s,y)\ \mathrm{d}x\mathrm{d}y\mathrm{d}s \\
& \qquad\qquad  \le \int_0^\infty x|\ln{x}| f^{in}(x)\ \mathrm{d}x + \frac{2}{e (1-m)} M_m(f^{in}) + C_{\ref{cst1}}(m)t\ .
\end{align*}
\end{lemma}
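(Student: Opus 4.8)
The plan is to derive a differential inequality for the functional
\[
E_j(t) := \int_0^\infty x|\ln x|\, f_j(t,x)\ \mathrm{d}x + \frac{1}{e(1-m)} M_m(f_j(t))
\]
by testing \eqref{b5} with an appropriate function $\vartheta$ that behaves like $x|\ln x|$ for large $x$ and like $x^m$ for small $x$. Since $x\mapsto x|\ln x|$ is not $C^{0,m}$ at the origin (and is not admissible near small sizes), the natural choice is $\vartheta(x) := x|\ln x| + \frac{1}{e(1-m)} x^m$, or a suitably regularized variant; one checks that $\vartheta(0)=0$ and $\vartheta\in C^{0,m_1}$. The $\ln j$ term and the negative remainder term in \eqref{b5} will be handled by noting that $\vartheta(x+y) \le (x+y)\ln(x+y) + \dots$ and that $x+y \ge j$ on the relevant domain of the last integral in \eqref{b5}, so that $\ln(x+y)\ge \ln j$; this is exactly where the last line of the claimed estimate comes from, after using the symmetrization already performed to get \eqref{b6}.

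The core of the argument is to estimate $\chi_\vartheta(x,y) = \vartheta(x+y)-\vartheta(x)-\vartheta(y)$ in the coagulation term and $N_\vartheta(y)$ in the fragmentation term. For the coagulation contribution from the $x|\ln x|$ part, Lemma~\ref{Lemb2} gives
\[
(x+y)\ln(x+y) - x\ln x - y\ln y \le 2\ln 2\,\sqrt{xy}\ ,
\]
and combined with the lower bound $K(x,y)\ge 2K_0 (xy)^{\lambda/2}$ from \eqref{b9} and the identity $\sqrt{xy}\le \tfrac12(x+y)$ one wants to extract a term comparable to $-\delta_\varrho M_\lambda(f_j)$. The fragmentation term $-\int_0^j a(x) N_\vartheta(x) f_j\,\mathrm{d}x$ produces, for the $x\ln x$ part, a contribution involving $\int_0^1 z|\ln z| B(z)\,\mathrm{d}z = \mathfrak{b}_{\ln}$ times $\int_0^j a(x) x f_j\,\mathrm{d}x = a_0 M_\lambda(f_j)$; here the scaling form \eqref{aa4} and the normalization $\int_0^1 zB(z)\,\mathrm{d}z = 1$ are essential. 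Matching the coagulation gain $+2K_0\ln 2$ (from $\sqrt{xy}\le \tfrac12(x+y)$ applied on the diagonal) against the fragmentation loss $a_0\mathfrak{b}_{\ln}$ and using the definition \eqref{aa10} of $\varrho_\star$ together with $M_1(f_j)\le\varrho$ from \eqref{b7} is precisely what yields the coefficient $\delta_\varrho = K_0\ln 2(\varrho_\star-\varrho) > 0$ in front of $\int_0^t M_\lambda(f_j(s))\,\mathrm{d}s$. The lower-order terms — the $x^m$ part of $\vartheta$, the contributions of $\chi_\vartheta$ and $N_\vartheta$ away from the diagonal, and the $|\ln x|$ versus $\ln x$ discrepancy (i.e.\ the region $x<1$ where the logarithm changes sign, contributing only bounded-in-size quantities controlled by $M_{m_1}(f_j)$ and $M_1(f_j)$, hence by $\varrho$ and $\sigma$) — are all absorbed into a term $C_{\ref{cst1}}(m)\,(M_1(f_j)+M_m(f_j)+\dots)$. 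Using $M_m(f_j(t))\le M_{m_1}(f_j(t))^{\theta} M_1(f_j(t))^{1-\theta}$ by interpolation, or directly the boundedness furnished by \eqref{b7}, this is bounded by $C_{\ref{cst1}}(m)$, and integrating in time over $[0,t]$ gives the stated inequality, with the initial data term $\tfrac{2}{e(1-m)} M_m(f^{in})$ rather than $\tfrac{1}{e(1-m)} M_m(f^{in})$ leaving a margin to absorb the cross terms generated at time $0$ or by the regularization of $\vartheta$.

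The main obstacle is the bookkeeping of the coagulation integrand near the diagonal: one must show that the positive part of
\[
\tfrac12\iint K(x,y)\,\chi_\vartheta(x,y) f_j f_j\,\mathrm{d}y\mathrm{d}x
\]
coming from $x|\ln x|$ is controlled, modulo lower-order terms, by $2K_0\ln 2 \iint \sqrt{xy}\,(xy)^{(\lambda-2)/2} \cdot xy\, f_j f_j$-type quantities that collapse to $2K_0\ln 2\, M_{\lambda/2}(f_j)M_{\lambda/2}(f_j)$... no, rather one needs the sharp pairing $2\ln 2\sqrt{xy}\le \ln 2\,(x+y)$ and then $K(x,y)\le K_0\sqrt{xy}(x^{\lambda-1}+y^{\lambda-1})$ from \eqref{b9} to produce a bound by $C\,M_\lambda(f_j)M_1(f_j)$, which by \eqref{b7} is $\le C\varrho\, M_\lambda(f_j)$; simultaneously the lower bound $K\ge 2K_0(xy)^{\lambda/2}$ must be used to generate a \emph{negative} $M_\lambda$ contribution large enough that, after subtracting the fragmentation gain term (which is \emph{positive} of size $a_0\mathfrak{b}_{\ln} M_\lambda(f_j)$), the net coefficient is $-\delta_\varrho$. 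Getting these three competing $M_\lambda(f_j)$-terms to combine with the correct sign and constant — this is the heart of the "balanced growth" mechanism — is the delicate point; everything else is routine once the test function $\vartheta$ is chosen and the splitting $\{x<1\}\cup\{x\ge1\}$ is carried out to deal with the absolute value in $|\ln x|$.
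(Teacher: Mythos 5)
Your overall architecture matches the paper's: test \eqref{b5} with $x\ln x$ and $x^m$, use Lemma~\ref{Lemb2} for $\chi$, the scaling form \eqref{aa4} for $N$, the boundary term with $\ln(x+y)\ge\ln j$, and match constants through \eqref{aa10}. But the paragraph you single out as ``the heart of the balanced growth mechanism'' gets that mechanism backwards. The lower bound $K\ge 2K_0(xy)^{\lambda/2}$ is never used here, and the coagulation term does \emph{not} generate the negative $M_\lambda$ contribution: for $\bar\vartheta(x)=x\ln x$ the coagulation term is the \emph{positive} (dangerous) one, bounded above by multiplying Lemma~\ref{Lemb2} directly by the upper bound in \eqref{b9}, i.e.\ $K(x,y)\chi_{\bar\vartheta}(x,y)\le 2\ln 2\,\sqrt{xy}\,K(x,y)\le 2K_0\ln 2\,(x^\lambda y+xy^\lambda)$, giving exactly $2K_0\ln 2\, M_1M_\lambda\le 2K_0\ln 2\,\varrho\,M_\lambda$. (Your ``sharp pairing'' $2\ln 2\sqrt{xy}\le\ln 2\,(x+y)$ followed by the upper bound on $K$ does not give a bound of the form $CM_\lambda M_1$ at all: $(x+y)\sqrt{xy}\,x^{\lambda-1}$ is not dominated by $x^\lambda y+xy^\lambda$.) The \emph{negative} term comes from fragmentation: $N_{\bar\vartheta}(y)=\mathfrak{b}_{\ln}\,y$ is an exact identity, so the fragmentation contribution to $\frac{\mathrm{d}}{\mathrm{d}t}\int\bar\vartheta f_j$ is $-a_0\mathfrak{b}_{\ln}M_\lambda=-2K_0\ln 2\,\varrho_\star M_\lambda$, and the net coefficient is $2K_0\ln 2(\varrho-\varrho_\star)=-2\delta_\varrho$. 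Calling fragmentation a ``positive gain'' and coagulation the source of negativity would put the smallness condition on the wrong side of $\varrho_\star$.

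A second, quieter gap is your treatment of the $x^m$ part. Its fragmentation contribution is $+a_0\mathfrak{b}_{m,1}M_{m+\lambda-1}(f_j)$ with $m+\lambda-1\in[1,\lambda]$ (recall $m\ge m_1\ge 2-\lambda$), and this is \emph{not} a lower-order quantity controlled by $\varrho$ and $\sigma$; it must be interpolated, $M_{m+\lambda-1}\le\varrho^{(1-m)/(\lambda-1)}M_\lambda^{(m+\lambda-2)/(\lambda-1)}$, and absorbed by Young's inequality into a fraction (say $\delta_\varrho/3$ after rescaling) of the $M_\lambda$ dissipation plus a constant. This is exactly why the log part must produce $-2\delta_\varrho M_\lambda$ while the statement retains only $-\delta_\varrho\int_0^tM_\lambda$. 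Finally, you should test with the signed function $x\ln x$ (so that $N_{\bar\vartheta}$ is computable exactly with the right sign), not with $x|\ln x|$, and convert to $x|\ln x|$ only at the very end via $x|\ln x|-2x^m/(e(1-m))\le x\ln x\le x|\ln x|$; this elementary step is what produces the asymmetric coefficients in front of $M_m(f_j(t))$ and $M_m(f^{in})$ in the statement, which your ``regularization margin'' explanation does not account for.
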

%%%%%%%%%%%%%%%%

\begin{proof}
Since $m\le 1$, there holds
\begin{equation*}
\chi_m(x,y) := (x+y)^m - x^m -y^m \le 0\ , \qquad (x,y)\in (0,\infty)^2\ ,
\end{equation*} 
while \eqref{aa4} and \eqref{aa6} give
\begin{equation*}
N_m(y) := y^m - \int_0^y x^m b(x,y)\ \mathrm{d}x = (1 - \mathfrak{b}_{m,1}) y^m \ge - \mathfrak{b}_{m,1} y^m\ , \qquad y\in (0,\infty)\ .
\end{equation*}
Consequently, we infer from \eqref{aa2}, \eqref{b5} (with $\vartheta(x) = x^m$, $x\in (0,2j)$), and the non-negativity of $f_j$ and $K$ that, for $t\ge 0$, 
\begin{equation}
\frac{\mathrm{d}}{\mathrm{d}t} M_m(f_j(t)) \le a_0 \mathfrak{b}_{m,1} M_{m+\lambda-1}(f_j(t))\ . \label{b10}
\end{equation}
As the choice of $m$ ensures that $m+\lambda-1\in [1,\lambda]$, it follows from \eqref{b7} and H\"older's inequality that 
\begin{align*}
M_{m+\lambda-1}(f_j(t)) & \le M_\lambda(f_j(t))^{(m+\lambda-2)/(\lambda-1)} M_1(f_j(t))^{(1-m)/(\lambda-1)} \\
& \le \varrho^{(1-m)/(\lambda-1)}  M_\lambda(f_j(t))^{(m+\lambda-2)/(\lambda-1)}\ ,
\end{align*}
which gives, together with \eqref{b10} and Young's inequality,
\begin{equation}
\frac{\mathrm{d}}{\mathrm{d}t} M_m(f_j(t)) \le \frac{e (1-m)\delta_\varrho}{3} M_\lambda(f_j(t)) + \frac{e(1-m) C_{\ref{cst1}}(m)}{3}\ , \qquad t\ge 0\ . \label{b11}
\end{equation}

We next set $\bar{\vartheta}(x) := x\ln{x}$, $x\in (0,\infty)$, and notice that 
\begin{equation*}
K(x,y) \chi_{\bar{\vartheta}}(x,y) \le 2 \ln{2} \sqrt{xy} K(x,y) \le 2 K_0 \ln{2} (x^\lambda y + x y^\lambda)\ , \qquad (x,y)\in (0,\infty)^2\ ,
\end{equation*}
by Lemma~\ref{Lemb1} and Lemma~\ref{Lemb2} and
\begin{equation*}
K(x,y) \bar{\vartheta}(x+y) \ge (x+y) K(x,y) \ln{j} \ge 0\ , \qquad x\in (j-y,j)\ , \ y\in (0,j)\ .
\end{equation*} 
Also, owing to \eqref{aa5} and \eqref{aa9}, 
\begin{equation*}
N_{\bar{\vartheta}}(y) = y \ln{y} - \int_0^1 yz \ln{(yz)} B(z)\ \mathrm{d}z = y \int_0^1 z |\ln{z}| B(z)\ \mathrm{d}z = \mathfrak{b}_{\ln} y\ , \qquad y>0\ .
\end{equation*}
Collecting the previous estimates, it follows from \eqref{aa2}, \eqref{b5} with $\vartheta(x)=\bar{\vartheta}(x)$, $x\in (0,2j)$, and \eqref{b7} that, for $t\ge 0$, 
\begin{align}
\frac{\mathrm{d}}{\mathrm{d}t} \int_0^\infty \bar{\vartheta}(x) f_j(t,x)\ \mathrm{d}x & \le 2K_0 \ln{2} M_1(f_j(t)) M_\lambda(f_j(t)) - a_0 \mathfrak{b}_{\ln} M_\lambda(f_j(t)) \nonumber \\
& \qquad - \frac{\ln{j}}{2} \int_0^j \int_{j-y}^j (x+y) K(x,y) f_j(t,x) f_j(t,y)\ \mathrm{d}x\mathrm{d}y \nonumber \\
& \le 2 K_0 \ln{2} (\varrho-\varrho_\star) M_\lambda(f_j(t)) \nonumber \\
& \qquad -\ln{j} \int_0^j \int_{j-y}^j x K(x,y) f_j(t,x) f_j(t,y)\ \mathrm{d}x\mathrm{d}y \nonumber \\
& = -2 \delta_\varrho M_\lambda(f_j(t)) -\ln{j} \int_0^j \int_{j-y}^j x K(x,y) f_j(t,x) f_j(t,y)\ \mathrm{d}x\mathrm{d}y \ . \label{b12}
\end{align}
Combining \eqref{b11} and \eqref{b12} leads us to
\begin{align*}
& \frac{\mathrm{d}}{\mathrm{d}t} \left[ \int_0^\infty x \ln{(x)} f_j(t,x)\ \mathrm{d}x + \frac{3}{e(1-m)} M_m(f_j(t)) \right] \\
& \qquad \le - \delta_\varrho M_\lambda(f_j(t)) -\ln{j} \int_0^j \int_{j-y}^j x K(x,y) f_j(t,x) f_j(t,y)\ \mathrm{d}x\mathrm{d}y + C_{\ref{cst1}}(m)\ .
\end{align*}
Hence, after integration with respect to time,
\begin{align*}
& \int_0^\infty x \ln{(x)} f_j(t,x)\ \mathrm{d}x + \frac{3}{e(1-m)} M_m(f_j(t)) + \delta_\varrho \int_0^t M_\lambda(f_j(s))\ \mathrm{d}s \\ 
& \qquad + \ln{j} \int_0^t \int_0^j \int_{j-y}^j x K(x,y) f_j(s,x) f_j(s,y)\ \mathrm{d}x\mathrm{d}y\mathrm{d}s \\
& \qquad\qquad \le \int_0^\infty x \ln{(x)} f_j^{in}(x)\ \mathrm{d}x + \frac{3}{e(1-m)} M_m(f_j^{in}) + C_{\ref{cst1}}(m)t \\
& \qquad\qquad \le \int_0^\infty x |\ln{x}| f^{in}(x)\ \mathrm{d}x + \frac{3}{e(1-m)} M_m(f^{in}) + C_{\ref{cst1}}(m)t
\end{align*}
for $t\ge 0$. We finally use the inequality
\begin{equation}
x |\ln{x}| - \frac{2 x^m}{e(1-m)} \le x \ln{x} \le x |\ln{x}| \ , \qquad x>0\ , \label{z1}
\end{equation}
to complete the proof.
\end{proof}

We next turn to estimates for moments of order $m\in (-\nu-1,m_1)$.

%%%%%%%%%%%%%%%%
\begin{lemma}\label{Lemb4}
\refstepcounter{NumConst}\label{cst2}
Consider $m\in (-\nu-1,m_1)$ and assume additionally that $f^{in}\in X_m$. There is $C_{\ref{cst2}}(m)>0$ depending on $m$ such that
\begin{equation*}
M_m(f_j(t)) \le \max\{ M_m(f^{in}) , C_{\ref{cst2}}(m) \} (2+t)^{(\lambda-m)/(\lambda-1)}\ , \qquad t\ge 0\ .
\end{equation*} 
\end{lemma}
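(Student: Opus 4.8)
The plan is to close a differential inequality for $M_m(f_j(t))$ when $m\in(-\nu-1,m_1)$. Starting from \eqref{b5} with $\vartheta(x)=x^m$, $x\in(0,2j)$, the last (negative) term in \eqref{b5} is discarded using the non-negativity of $K$ and $f_j$. For the coagulation contribution, since $m<1$ we have $\chi_m(x,y)=(x+y)^m-x^m-y^m\le 0$, so that term is also non-positive and can be dropped. The only term that needs care is the fragmentation one: with $N_m(y)=(1-\mathfrak{b}_{m,1})y^m$, it contributes $-a_0(1-\mathfrak{b}_{m,1})M_{m+\lambda-1}(f_j(t))$. Here $\mathfrak{b}_{m,1}$ is finite because $(m,1)\in\mathcal A_\nu$ (as $m>-\nu-1>-1$ and $m+\nu>-1$), so $\mathfrak{b}_{m,1}=\int_0^1 z^m B(z)\,\mathrm dz$ makes sense. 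Unlike the case $m\ge m_1\ge 2-\lambda$ treated in Lemma~\ref{Lemb3}, the sign of $1-\mathfrak{b}_{m,1}$ is not controlled for small $m$, so this term may be positive; in that case one must bound $M_{m+\lambda-1}(f_j(t))$ from above.

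The key interpolation step: since $m<m_1<1$ we have $m+\lambda-1<\lambda$, and since $m>-\nu-1$ and $\lambda-1\in(0,1]$ we have $m+\lambda-1>-\nu-1+\lambda-1\ge -\nu-2+\lambda\ge m$ (more simply $m+\lambda-1>m$ as $\lambda>1$), so $m+\lambda-1$ lies strictly between $m$ and $\lambda$. By H\"older's inequality,
\begin{equation*}
M_{m+\lambda-1}(f_j(t)) \le M_m(f_j(t))^{\theta} M_\lambda(f_j(t))^{1-\theta}, \qquad \theta := \frac{\lambda-(m+\lambda-1)}{\lambda-m} = \frac{1-m}{\lambda-m}\in(0,1)\ .
\end{equation*}
Since $\lambda-1\le 1$ one checks $1-\theta=(\lambda-1)/(\lambda-m)\le 1$, so this interpolation uses only $M_\lambda$ and $M_m$ themselves, with no need for a higher moment. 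Now the quantity $\int_0^t M_\lambda(f_j(s))\,\mathrm ds$ is already controlled linearly in $t$ by Lemma~\ref{Lemb3} (applied with, say, $m=m_1$): there is a constant $C$ with $\int_0^t M_\lambda(f_j(s))\,\mathrm ds\le C(1+t)$ for all $t\ge 0$ and all $j$. Writing $y_j(t):=M_m(f_j(t))$, the differential inequality reads
\begin{equation*}
\dot y_j(t) \le a_0|1-\mathfrak{b}_{m,1}|\, y_j(t)^{\theta}\, M_\lambda(f_j(t))^{1-\theta}\ .
\end{equation*}
Dividing by $y_j^\theta$ (or equivalently working with $y_j^{1-\theta}$) and integrating gives, via Young's inequality $M_\lambda^{1-\theta}\le (1-\theta)M_\lambda + \theta$, the bound $y_j(t)^{1-\theta}\le y_j(0)^{1-\theta} + C\int_0^t(1+M_\lambda(f_j(s)))\,\mathrm ds\le M_m(f^{in})^{1-\theta}+C(1+t)$. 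Raising to the power $1/(1-\theta)=(\lambda-m)/(\lambda-1)$ and using $f_j^{in}\le f^{in}$ yields exactly $M_m(f_j(t))\le \max\{M_m(f^{in}),C_{\ref{cst2}}(m)\}(2+t)^{(\lambda-m)/(\lambda-1)}$, after adjusting constants so that the "$2$" absorbs the $j$-independent additive terms.

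The main obstacle, such as it is, is bookkeeping rather than conceptual: one must (i) verify carefully that $m+\lambda-1\in(m,\lambda)$ so the H\"older interpolation is legitimate and the exponent $1-\theta$ does not exceed $1$ (this is where $\gamma=\lambda-1\in(0,1]$ and $m>-\nu-1$ enter), (ii) confirm $\mathfrak{b}_{m,1}<\infty$ from $(m,1)\in\mathcal A_\nu$, and (iii) track how the additive constants and the linear-in-$t$ bound on $\int_0^t M_\lambda$ combine so that the final estimate has precisely the stated polynomial form in $(2+t)$. One should also note that the estimate on $\int_0^t M_\lambda(f_j(s))\,\mathrm ds$ from Lemma~\ref{Lemb3} already requires $f^{in}$ to have the integrability properties \eqref{aa12} and the smallness $\varrho<\varrho_\star$, which are in force throughout the section; no further assumption on $f^{in}$ beyond $f^{in}\in X_m$ is needed for this lemma.
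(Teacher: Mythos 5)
Your argument is correct and follows essentially the same route as the paper: drop the (non-positive) coagulation and truncation contributions, bound the fragmentation term by a multiple of $M_{m+\lambda-1}(f_j)$, interpolate $M_{m+\lambda-1}\le M_m^{(1-m)/(\lambda-m)}M_\lambda^{(\lambda-1)/(\lambda-m)}$, and integrate the resulting Bernoulli-type differential inequality using the bound on $\int_0^t M_\lambda(f_j(s))\,\mathrm{d}s$ from Lemma~\ref{Lemb3}. The only (immaterial) deviation is that you handle $\int_0^t M_\lambda^{(\lambda-1)/(\lambda-m)}\,\mathrm{d}s$ via a pointwise Young inequality, whereas the paper uses H\"older's inequality in time; both yield the linear-in-$t$ bound needed for the stated $(2+t)^{(\lambda-m)/(\lambda-1)}$ growth.
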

%%%%%%%%%%%%%%%%

\begin{proof}
Since $m\le 1$, we argue as at the beginning of the proof of Lemma~\ref{Lemb3} to derive from \eqref{b5} that
\begin{equation*}
\frac{\mathrm{d}}{\mathrm{d}t} M_m(f_j(t)) \le a_0 \mathfrak{b}_{m,1} M_{m+\lambda-1}(f_j(t))\ , \qquad t\ge 0\ .
\end{equation*}
The range of $m$ and \eqref{aa1} next imply that $m+\lambda-1\in (m,\lambda)$ and we infer from H\"older's inequality that
\begin{equation*}
M_{m+\lambda-1}(f_j(t)) \le M_\lambda(f_j(t))^{(\lambda-1)/(\lambda-m)} M_m(f_j(t))^{(1-m)/(\lambda-m)}\ , \qquad t\ge 0\ .
\end{equation*}
Combining the previous inequalities leads us to
\begin{equation*}
\frac{\mathrm{d}}{\mathrm{d}t} M_m(f_j(t)) \le a_0 \mathfrak{b}_{m,1} M_\lambda(f_j(t))^{(\lambda-1)/(\lambda-m)} M_m(f_j(t))^{(1-m)/(\lambda-m)}\ , 
\end{equation*}
hence, after integration with respect to time,
\begin{equation*}
M_m(f_j(t))^{(\lambda-1)/(\lambda-m)} \le M_m(f_j^{in})^{(\lambda-1)/(\lambda-m)} + \frac{\lambda-1}{\lambda-m} a_0 \mathfrak{b}_{m,1} \int_0^t M_\lambda(f_j(s))^{(\lambda-1)/(\lambda-m)} \mathrm{d}s 
\end{equation*}
for $t\ge 0$. It then follows from \eqref{b00}, \eqref{b3}, Lemma~\ref{Lemb3}, and H\"older's inequality that
\begin{align*}
M_m(f_j(t))^{(\lambda-1)/(\lambda-m)} & \le M_m(f^{in})^{(\lambda-1)/(\lambda-m)} \\
& \qquad + a_0 \mathfrak{b}_{m,1} t^{(1-m)/(\lambda-m)} \left( \int_0^t M_\lambda(f_j(s)) \mathrm{d}s \right)^{(\lambda-1)/(\lambda-m)} \\
& \le M_m(f^{in})^{(\lambda-1)/(\lambda-m)} \\
& \qquad + a_0 \mathfrak{b}_{m,1} t^{(1-m)/(\lambda-m)} \left( \frac{\sigma +C_{\ref{cst1}}(m_1) t}{\delta_\varrho} \right)^{(\lambda-1)/(\lambda-m)} \\
& \le \max\left\{ M_m(f^{in}) , C_{\ref{cst2}}(m)  \right\}^{(\lambda-1)/(\lambda-m)} (2+t)\ ,
\end{align*}
where
\begin{equation*}
C_{\ref{cst2}}(m) := \frac{\sigma +C_{\ref{cst1}}(m_1)}{\delta_\varrho} \left( a_0 \mathfrak{b}_{m,1} \right)^{(\lambda-m)/(\lambda-1)} \ .
\end{equation*}
Lemma~\ref{Lemb4} then readily follows.
\end{proof}

We end up the study of the evolution of moments with moments of higher order. 

%%%%%%%%%%%%%%%%
\begin{lemma}\label{Lemb5}
\refstepcounter{NumConst}\label{cst3}
Let $m> 1+\lambda-\alpha$ and assume additionally that $f^{in}\in X_m$. For every $T>0$, there is $C_{\ref{cst3}}(m,T)>0$ depending on $m$ and $T$ such that
\begin{equation*}
M_m(f_j(t)) \le \max\left\{ M_m(f^{in}) , C_{\ref{cst3}}(m,T) \right\}\ , \qquad t\in [0,T]\ .
\end{equation*}
\end{lemma}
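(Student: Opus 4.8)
The plan is to test the truncated equation \eqref{b4a} with $\vartheta(x)=x^m$ and to absorb the coagulation production of $M_m(f_j)$ into the dissipative fragmentation contribution, the compensation --- delicate because of the balanced homogeneities --- being made possible by the $L^1$-in-time control of $M_\lambda(f_j)$ provided by Lemma~\ref{Lemb3}.

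First I would insert $\vartheta(x)=x^m$, $x\in(0,2j)$, into \eqref{b5}; this is admissible since $x\mapsto x^m$ belongs to $C^{0,m_0}([0,2j])$ and vanishes at the origin. Because $m>1$, the convex function $x\mapsto x^m$ vanishing at $0$ is superadditive, so $\chi_m(x,y)=(x+y)^m-x^m-y^m\ge 0$; moreover \eqref{aa4}--\eqref{aa6} give $N_m(y)=(1-\mathfrak{b}_{m,1})y^m$, and since $z^m<z$ on $(0,1)$ one has $\mathfrak{b}_{m,1}<\int_0^1 zB(z)\,\mathrm{d}z=1$ by \eqref{aa5}, so the fragmentation contribution is dissipative. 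Discarding the non-positive last term of \eqref{b5} then gives
\[
\frac{\mathrm{d}}{\mathrm{d}t}M_m(f_j(t))\le \frac{1}{2}\int_0^\infty\!\!\int_0^\infty K(x,y)\chi_m(x,y)f_j(t,x)f_j(t,y)\,\mathrm{d}y\,\mathrm{d}x-a_0(1-\mathfrak{b}_{m,1})M_{m+\lambda-1}(f_j(t))\ .
\]

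Next I would bound the coagulation integral. There is $C(m)>0$ with $\chi_m(x,y)\le C(m)(x^{m-1}y+xy^{m-1})$ on $(0,\infty)^2$ (the quotient of the two sides being bounded, by homogeneity), which together with \eqref{aa3} and the symmetry of the integrand yields
\[
\frac12\int_0^\infty\!\!\int_0^\infty K\chi_m f_jf_j\,\mathrm{d}y\,\mathrm{d}x\le C(m)K_0\big(M_{m+\alpha-1}(f_j)M_{\lambda-\alpha+1}(f_j)+M_{m+\lambda-\alpha-1}(f_j)M_{\alpha+1}(f_j)\big)\ .
\]
By \eqref{aa1}, \eqref{b8}, and the hypothesis $m>1+\lambda-\alpha$, the exponents $m+\alpha-1$ and $m+\lambda-\alpha-1$ both lie in $(\lambda,m]$, the exponents $\lambda-\alpha+1$ and $\alpha+1$ both lie in $[\lambda,m]$, and in each product the two exponents add up to $m+\lambda$, that is, exactly one unit above the order $m+\lambda-1$ of the moment controlled by fragmentation; this unit gap is the borderline feature of the balanced regime. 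Writing each moment $M_\ell(f_j)$ with $\ell\in[\lambda,m]$ as $M_m(f_j)^{(\ell-\lambda)/(m-\lambda)}M_\lambda(f_j)^{(m-\ell)/(m-\lambda)}$, the exponents of $M_m(f_j)$ in each product then add up to $1$, so the coagulation integral is at most $C\,M_m(f_j)M_\lambda(f_j)$, and therefore
\[
\frac{\mathrm{d}}{\mathrm{d}t}M_m(f_j(t))\le C\,M_m(f_j(t))M_\lambda(f_j(t))-a_0(1-\mathfrak{b}_{m,1})M_{m+\lambda-1}(f_j(t))\ .
\]

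To conclude, I would use $M_m(f_j)\le M_{m+\lambda-1}(f_j)^{\beta}M_1(f_j)^{1-\beta}$ with $\beta:=(m-1)/(m+\lambda-2)\in(0,1)$, together with \eqref{b7}, to obtain the scalar inequality $u'\le C\,M_\lambda(f_j)\,u-c_0\,u^{1/\beta}$ for $u(t):=M_m(f_j(t))$, with $c_0:=a_0(1-\mathfrak{b}_{m,1})\varrho^{-(1-\beta)/\beta}>0$ and \emph{superlinear} dissipation since $1/\beta>1$. Since $\int_0^t M_\lambda(f_j(s))\,\mathrm{d}s\le(\sigma+C_{\ref{cst1}}(m_1)t)/\delta_\varrho$ by Lemma~\ref{Lemb3} applied with $m=m_1$ and by \eqref{b00}, the function $w(t):=u(t)\exp\big(-C\int_0^t M_\lambda(f_j(s))\,\mathrm{d}s\big)$ satisfies $w'\le -c_0 w^{1/\beta}$, hence is non-increasing, and comparison with the differential equation $w'=-c_0 w^{1/\beta}$ gives $w(t)^{1-1/\beta}\ge w(0)^{1-1/\beta}+c_0(1/\beta-1)t$; undoing the substitution and estimating the exponential factor on $[0,T]$ produces a bound of the form stated in Lemma~\ref{Lemb5}, with $C_{\ref{cst3}}(m,T)$ controlled in terms of $c_0$, $\beta$, $C_{\ref{cst1}}(m_1)$, $\sigma$, $\delta_\varrho$, and $T$.

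The main obstacle is precisely the unit gap between the order $m+\lambda$ of the coagulation production and the order $m+\lambda-1$ of the fragmentation loss. In particular the smallness assumption $\varrho<\varrho_\star$ is \emph{not} sufficient to make $M_m(f_j)$ non-increasing for large values (the relevant threshold involves the constant $C(m)K_0$ of the $\chi_m$-estimate, which degrades as $m\to\infty$), so one genuinely has to exploit the $L^1$-in-time bound on $M_\lambda(f_j)$ coming from the balanced-growth estimate of Lemma~\ref{Lemb3}; combining it with the superlinear dissipation $-c_0 u^{1/\beta}$ so as to obtain a bound uniform in $j$ and depending on $f^{in}$ only through $M_m(f^{in})$ and the quantities entering $\sigma$ in \eqref{b00} is where the argument has to be run carefully.
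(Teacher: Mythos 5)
Your argument is correct in its substance but handles the coagulation production differently from the paper, and the difference shows up in the final form of the estimate. Both proofs start identically: test \eqref{b5} with $\vartheta(x)=x^m$, use $\chi_m(x,y)\le c_m(x^{m-1}y+xy^{m-1})$ to reduce the coagulation term to $C(m)K_0\left(M_{m+\alpha-1}M_{1+\lambda-\alpha}+M_{m+\lambda-\alpha-1}M_{1+\alpha}\right)$, and keep the fragmentation dissipation $-a_0(1-\mathfrak{b}_{m,1})M_{m+\lambda-1}$. You then interpolate all four moment orders between $M_\lambda$ and $M_m$ (your verification that they all lie in $[\lambda,m]$ and that the $M_m$-exponents sum to one is correct under \eqref{aa1} and $m>1+\lambda-\alpha$), obtaining the production term $C\,M_m M_\lambda$, and close by a weighted Gronwall argument using the $L^1$-in-time bound $\int_0^T M_\lambda(f_j)\,\mathrm{d}s\le(\sigma+C_{\ref{cst1}}(m_1)T)/\delta_\varrho$ from Lemma~\ref{Lemb3}. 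The paper instead interpolates $M_{m+\alpha-1}$ and $M_{m+\lambda-\alpha-1}$ between $M_1$ and $M_m$, splits $M_{1+\alpha}$ and $M_{1+\lambda-\alpha}$ at a level $R$, and uses the \emph{other} output of Lemma~\ref{Lemb3}, namely the uniform tail smallness $Q_j(t,R)\le(\sigma+C_{\ref{cst1}}(m_1)t)/\ln R$, to make the coefficient of the critical power $M_m^{(m+\lambda-2)/(m-1)}$ small enough (for $R=R_T$ large) to be absorbed by the fragmentation term of the same power; this yields $u'\le C_{\ref{cst6}}(m,T)-C_{\ref{cst5}}(m)u^{(m+\lambda-2)/(m-1)}$ and, by the comparison principle, exactly the stated bound $\max\{M_m(f^{in}),C_{\ref{cst3}}(m,T)\}$. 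Your route is shorter and avoids the cutoff, but be aware that it delivers $M_m(f_j(t))\le e^{C(T)}\min\left\{M_m(f^{in}),\left[c_0(q-1)t\right]^{-1/(q-1)}\right\}$ with $q=1/\beta$: since $\int_0^t M_\lambda(f_j)\,\mathrm{d}s$ does not vanish as $t\to 0$ (it is only bounded by $(\sigma+C_{\ref{cst1}}(m_1)t)/\delta_\varrho$), the exponential factor cannot be removed for small $t$, so you obtain a \emph{multiplicative} constant in front of $M_m(f^{in})$ rather than the additive $\max$ of the statement. This is strictly weaker than the lemma as written — your closing claim that you recover ``a bound of the form stated'' should be tempered accordingly — but it is entirely sufficient for the only use made of the lemma, namely Theorem~\ref{ThmP1}~(b) via Fatou's lemma.
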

%%%%%%%%%%%%%%%%

\begin{proof} We first recall that, since $m>1$, there is $c_m>0$ depending only on $m$ such that
\begin{equation*}
\chi_m(x,y) = (x+y)^m - x^m - y^m \le c_m \left( x^{m-1} y + x y^{m-1} \right)\ , \qquad (x,y)\in (0,\infty)^2\ ,
\end{equation*}
see \cite{BLLxx} or \cite[p.~216]{ClKa99} for instance. Let $t\ge 0$ and $R>1$ and define
\begin{equation*}
Q_j(t,R) := \int_R^\infty x f_j(t,x)\ \mathrm{d}x\ .
\end{equation*}
A straightforward consequence of \eqref{b00} and Lemma~\ref{Lemb3} is that
\begin{equation}
Q_j(t,R) \le \frac{\sigma + C_{\ref{cst1}}(m_1)t}{\ln{R}}\ . \label{b13}
\end{equation} 
It follows from \eqref{aa3} and the previous inequality that
\begin{align*}
P_j(t) & := \frac{1}{2} \int_0^j \int_0^j K(x,y) \chi_m(x,y) f_j(t,x) f_j(t,y)\ \mathrm{d}y\mathrm{d}x \\
& \le \frac{c_m}{2} \int_0^j \int_0^j K(x,y) \left( x^{m-1} y + x y^{m-1} \right) f_j(t,x) f_j(t,y)\ \mathrm{d}y\mathrm{d}x \\
& \le  K_0 c_m \left[ M_{m-1+\alpha}(f_j(t)) M_{1+\lambda-\alpha}(f_j(t)) + M_{m+\lambda-1-\alpha}(f_j(t)) M_{1+\alpha}(f_j(t)) \right]\ .
\end{align*} 
On the one hand, owing to \eqref{b8} and the range of $m$, both $m+\lambda-\alpha-1$ and $m-1+\alpha$ belong to $[1,m]$ and we infer from \eqref{b7} and H\"older's inequality that
\begin{equation*}
M_{m+\lambda-\alpha-1}(f_j(t)) \le \varrho^{(\alpha+1-\lambda)/(m-1)} M_m(f_j(t))^{(m+\lambda-\alpha-2)/(m-1)} \ ,
\end{equation*}
and
\begin{equation*}
M_{m-1+\alpha}(f_j(t)) \le \varrho^{(1-\alpha)/(m-1)} M_m(f_j(t))^{(m+\alpha-2)/(m-1)} \ .
\end{equation*}
On the other hand, since $1+\alpha\in (1,m)$ and $1+\lambda-\alpha\in (1,m)$ due to \eqref{b8} and the choice of $m$, it follows from \eqref{b7}, \eqref{b8}, and H\"older's inequality
\begin{align*}
M_{1+\alpha}(f_j(t)) & \le R^\alpha \int_0^R x f_j(t,x)\ \mathrm{d}x \\
& \qquad + \left( \int_R^\infty x^m f_j(t,x)\ \mathrm{d}x \right)^{\alpha/(m-1)} Q_j(t,R)^{(m-1-\alpha)/(m-1)} \\
& \le \varrho R + Q_j(t,R)^{(m-1-\alpha)/(m-1)} M_m(f_j(t))^{\alpha/(m-1)} \\
& \le \varrho R + \varrho^{(\lambda-2\alpha)/(m-1)} Q_j(t,R)^{(m-1-\lambda+\alpha)/(m-1)} M_m(f_j(t))^{\alpha/(m-1)}\ ,
\end{align*}
and
\begin{align*}
M_{1+\lambda-\alpha}(f_j(t)) & \le R^{\lambda-\alpha} \int_0^R x f_j(t,x)\ \mathrm{d}x \\
& \qquad + \left( \int_R^\infty x^m f_j(t,x)\ \mathrm{d}x \right)^{(\lambda-\alpha)/(m-1)} Q_j(t,R)^{(m-1-\lambda+\alpha)/(m-1)} \\
& \le \varrho R + Q_j(t,R)^{(m-1-\lambda+\alpha)/(m-1)} M_m(f_j(t))^{(\lambda-\alpha)/(m-1)}\ .
\end{align*}
Collecting the above estimates leads us to
\begin{align}
P_j(t) & \le C(m) R \left[ M_m(f_j(t))^{(m+\alpha-2)/(m-1)} + M_m(f_j(t))^{(m+\lambda-\alpha-2)/(m-1)} \right] \nonumber \\
& \qquad + C(m) Q_j(t,R)^{(m-1-\lambda+\alpha)/(m-1)} M_m(f_j(t))^{(m+\lambda-2)/(m-1)}\ . \label{b14}
\end{align}

Now, since
\begin{equation*}
N_m(y) = y^m - \int_0^y x^m b(x,y)\ \mathrm{d}x = (1 - \mathfrak{b}_{m,1}) y^m\ , \qquad y\ge 0\ ,
\end{equation*}
and $1 - \mathfrak{b}_{m,1}>0$ by \eqref{aa4}, \eqref{aa5}, and \eqref{aa6}, it follows from \eqref{aa2}, \eqref{b5}, and \eqref{b14} that
\refstepcounter{NumConst}\label{cst4}
\begin{align*}
\frac{\mathrm{d}}{\mathrm{d}t} M_m(f_j(t)) & \le C_{\ref{cst4}}(m) R \left[ M_m(f_j(t))^{(m+\alpha-2)/(m-1)} + M_m(f_j(t))^{(m+\lambda-\alpha-2)/(m-1)} \right] \\
& \qquad + C_{\ref{cst4}}(m) Q_j(t,R)^{(m-1-\lambda+\alpha)/(m-1)} M_m(f_j(t))^{(m+\lambda-2)/(m-1)} \\
& \qquad - a_0 (1-\mathfrak{b}_{m,1}) M_{m+\lambda-1}(f_j(t))\ .
\end{align*}
By \eqref{b7} and H\"older's inequality, 
\begin{equation*}
M_m(f_j(t)) \le \varrho^{(\lambda-1)/(m+\lambda-2)} M_{m+\lambda-1}(f_j(t))^{(m-1)/(m+\lambda-2)}\ ,
\end{equation*}
so that \refstepcounter{NumConst}\label{cst5}
\begin{align}
\frac{\mathrm{d}}{\mathrm{d}t} M_m(f_j(t)) & \le C_{\ref{cst4}}(m) R \left[ M_m(f_j(t))^{(m+\alpha-2)/(m-1)} + M_m(f_j(t))^{(m+\lambda-\alpha-2)/(m-1)} \right] \nonumber \\
& \qquad + C_{\ref{cst4}}(m) Q_j(t,R)^{(m-1-\lambda+\alpha)/(m-1)} M_m(f_j(t))^{(m+\lambda-2)/(m-1)} \label{b15}\\
& \qquad - 4 C_{\ref{cst5}}(m) M_m(f_j(t))^{(m+\lambda-2)/(m-1)}\ , \nonumber
\end{align}
where
\begin{equation*}
C_{\ref{cst5}}(m) := \frac{a_0 (1-\mathfrak{b}_{m,1}) \varrho^{-(\lambda-1)/(m-1)}}{4}\ . 
\end{equation*}
Observe that $m+\alpha-2<m+\lambda-2$ and $m+\lambda-\alpha-2<m+\lambda-2$, so that the first term on the right-hand side of \eqref{b15} is strictly dominated by the last one.

Now, let $T>0$ and consider $t\in [0,T]$. It readily follows from \eqref{b13} that there is $R_T>1$ large enough such that 
\begin{equation*}
Q_j(t,R_T) \le \frac{\sigma + C_{\ref{cst1}}(m_1) T}{\ln{R_T}} \le \left( \frac{2 C_{\ref{cst5}}(m)}{C_{\ref{cst4}}(m)} \right)^{(m-1)/(m-1-\lambda+\alpha)}\ , \qquad t\in [0,T]\ .
\end{equation*}
Taking $R=R_T$ in \eqref{b15} and using the above inequality as well as Young's inequality, we end up with
\refstepcounter{NumConst}\label{cst6}
\begin{equation*}
\frac{\mathrm{d}}{\mathrm{d}t} M_m(f_j(t)) \le C_{\ref{cst6}}(m,T) - C_{\ref{cst5}}(m) M_m(f_j(t))^{(m+\lambda-2)/(m-1)}\ , \qquad t\in [0,T]\ .
\end{equation*}
We then infer from the comparison principle that
\begin{equation*}
M_m(f_j(t)) \le \max\left\{ M_m(f_j^{in}) , \left( \frac{C_{\ref{cst6}}(m,T)}{C_{\ref{cst5}}(m)} \right)^{(m-1)/(m+\lambda-2)} \right\}
\end{equation*}
for $t\in [0,T]$, and we use the obvious bound $M_m(f_j^{in})\le M_m(f^{in})$ due to \eqref{b3} to complete the proof.
\end{proof}

%%%%%%%%%%%%%%%%
%%%%%%%%%%%%%%%%
\subsection{Uniform Integrability} \label{sec2.4}
%%%%%%%%%%%%%%%%
%%%%%%%%%%%%%%%%

The outcome of the previous section guarantees that there is no leak of matter for small and large sizes. The purpose of the next result is to prevent concentration of matter at a finite size by showing the uniform integrability of the sequence $(f_j)_j$ in $X_{m_1}$. To this end, we first recall that, since $f^{in} \in X_{m_1}\subset X_{m_0}\cap X_1$, a refined version of the de la Vall\'ee-Poussin theorem \cite{BLLxx, Le77} ensures that there is a non-negative and convex function $\Phi\in C^1([0,\infty))$ with concave first derivative $\Phi'$ such that
\begin{subequations}\label{b16}
\begin{align}
& \Phi(0)=\Phi'(0)=0\ , \qquad \lim_{r\to\infty} \Phi'(r) = \lim_{r\to\infty} \frac{\Phi(r)}{r} = \infty\ , \label{b16a} \\
& \mathcal{I}^{in} := \int_0^\infty x^{m_1} \Phi(f^{in}(x))\ \mathrm{d}x < \infty\ . \label{b16b}
\end{align}
\end{subequations}
As a consequence of \eqref{b16}, the convexity of $\Phi$, and the concavity of $\Phi'$, there holds
\begin{equation}
\Phi(r) \ge \Phi_1(r) := r \Phi'(r) - \Phi(r) \ge 0\ , \qquad r\in [0,\infty)\ , \label{b17a}
\end{equation}
and
\begin{equation}
s \Phi'(r) \le r \Phi'(r) + \Phi(s) - \Phi(r) = \Phi(s) + \Phi_1(r)\ , \qquad (r,s)\in [0,\infty)^2\ . \label{b17b}
\end{equation}
It is furthermore possible to construct $\Phi$ such that
\begin{equation}
S_p := \sup_{r\ge 0}\left\{ \frac{\Phi(r)}{r^p} \right\} < \infty \;\text{ for all }\; p\in (1,2]\ , \label{b18}
\end{equation} 
see \cite{BLLxx}. We next fix two additional parameters $\delta_1\in [0,1)$ and $p_1\in (1,2)$ which only depend on $\lambda$ and $\nu$ and satisfy
\begin{equation}
\delta_1 := \frac{2-\lambda+(m_1+1-\lambda)_+}{2} \;\text{ and }\; 1 < p_1 < 1 + \min\left\{ \frac{m_1+\nu+1}{\delta_1-\nu} , \frac{\lambda-1}{1+\delta_1} \right\}\ . \label{b19}
\end{equation}

%%%%%%%%%%%%%%%%
\begin{lemma}\label{Lemb6}
\refstepcounter{NumConst}\label{cst7}
Let $T>0$. There is $C_{\ref{cst7}}(T,S_{p_1})>0$ depending on $T$ and $S_{p_1}$ such that
\begin{equation*}
\int_0^\infty x^{m_1} \Phi(f_j(t,x))\ \mathrm{d}x \le C_{\ref{cst7}}(T,S_{p_1}) \left( 1 + \mathcal{I}^{in} \right)\ , \qquad t\in [0,T]\ .
\end{equation*}
\end{lemma}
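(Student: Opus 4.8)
The plan is to differentiate $t\mapsto \int_0^\infty x^{m_1}\Phi(f_j(t,x))\,\mathrm{d}x$ along the approximate equation \eqref{b4a}, using the test-function-type identity \eqref{b5} but now with the nonlinear weight $x\mapsto x^{m_1}\Phi'(f_j(t,x))$ in place of a fixed $\vartheta$. Concretely, I would write
\begin{align*}
\frac{\mathrm{d}}{\mathrm{d}t}\int_0^\infty x^{m_1}\Phi(f_j(t,x))\,\mathrm{d}x &= \int_0^j x^{m_1}\Phi'(f_j)\,\mathcal{C}_jf_j\,\mathrm{d}x + \int_0^j x^{m_1}\Phi'(f_j)\,\mathcal{F}_jf_j\,\mathrm{d}x,
\end{align*}
and treat the coagulation and fragmentation contributions separately. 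For the coagulation part, the classical symmetrisation of the gain and loss terms turns it into
$$
\frac12\int_0^j\int_0^{j-x}K(x,y)f_j(t,x)f_j(t,y)\bigl[(x+y)^{m_1}\Phi'(f_j(t,x+y)) - x^{m_1}\Phi'(f_j(t,x)) - y^{m_1}\Phi'(f_j(t,y))\bigr]\,\mathrm{d}y\,\mathrm{d}x
$$
up to a nonpositive boundary term of the type appearing in \eqref{b5}; the bracket is then controlled using the monotonicity/convexity inequalities \eqref{b17a}--\eqref{b17b}, exactly the mechanism of \cite{LaMi02c, MiRR03}. Since $m_1<1$ one has $(x+y)^{m_1}\le x^{m_1}+y^{m_1}$, so after applying \eqref{b17b} with $(r,s)=(f_j(t,x+y),f_j(t,x))$ and $(r,s)=(f_j(t,x+y),f_j(t,y))$ and discarding the resulting $-\,(x+y)^{m_1}\Phi(f_j(t,x+y))$ contribution, the coagulation term is bounded above by
$$
C\int_0^j\int_0^j K(x,y) f_j(t,x)f_j(t,y)\,x^{m_1}\bigl[\Phi_1(f_j(t,x)) + \Phi(f_j(t,y))\bigr]\,\mathrm{d}y\,\mathrm{d}x,
$$
and since $\Phi_1\le\Phi$ by \eqref{b17a} this is of the form $C\int_0^j\int_0^j K(x,y)f_j(t,y)\,x^{m_1}\Phi(f_j(t,x))\,\mathrm{d}y\,\mathrm{d}x$ plus a symmetric counterpart.

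The next step is to estimate these remaining double integrals using \eqref{b9}: $K(x,y)\le K_0\sqrt{xy}(x^{\lambda-1}+y^{\lambda-1})$. Splitting the $y$-integration and invoking the moment bounds already proved — $M_1(f_j(t))\le\varrho$ from \eqref{b7} and $M_\lambda(f_j)\in L^1(0,T)$ together with the $X_{m_1}$ and $x|\ln x|$ bounds from Lemma~\ref{Lemb3}, plus Lemma~\ref{Lemb4} if a moment of order slightly below $m_1$ is needed — one extracts a factor $(1+M_\lambda(f_j(t)))$ times $\int_0^\infty x^{\theta}\Phi(f_j(t,x))\,\mathrm{d}x$ for suitable exponents $\theta$. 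To close the argument I would use \eqref{b18}: writing $\Phi(f_j) = \Phi(f_j)^{1/p_1}\Phi(f_j)^{1-1/p_1}$ and $\Phi(f_j)^{1-1/p_1}\le S_{p_1}^{1-1/p_1}f_j^{p_1-1}$, interpolate so that the "bad" powers of $x$ get absorbed against the available moments, leaving a bound of the form $C(T,S_{p_1})(1+M_\lambda(f_j(t)))\bigl(1+\int_0^\infty x^{m_1}\Phi(f_j(t,x))\,\mathrm{d}x\bigr)$; this is precisely where the parameters $\delta_1$ and $p_1$ of \eqref{b19} are engineered to make the exponents match the integrability of $M_\lambda(f_j)$.

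For the fragmentation part, the gain term $\int_0^j x^{m_1}\Phi'(f_j(t,x))\int_x^j a(y)b(x,y)f_j(t,y)\,\mathrm{d}y\,\mathrm{d}x$ is handled by \eqref{b17b} with $(r,s)=(f_j(t,x),f_j(t,y))$, giving $x^{m_1}\Phi'(f_j(t,x))f_j(t,y)\le x^{m_1}[\Phi(f_j(t,y))+\Phi_1(f_j(t,x))]\le x^{m_1}\Phi(f_j(t,y))+x^{m_1}\Phi(f_j(t,x))$; the loss term $-\int_0^j x^{m_1}a(x)\Phi_1(f_j(t,x))\,\mathrm{d}x$ is nonpositive and discarded. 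After inserting $a(y)=a_0y^{\lambda-1}$, the scaling form \eqref{aa4} of $b$, and Fubini, the gain term reduces to integrals like $a_0\mathfrak{b}_{m_1+\lambda-1,1}\int_0^j y^{m_1+\lambda-1}\Phi(f_j(t,y))\,\mathrm{d}y$ plus a term with the $(m,p)=(\cdot,p_1)$ moments $\mathfrak{b}_{m,p_1}$ from \eqref{aa6}; the finiteness of these constants is exactly what \eqref{aa6}--\eqref{aa7} guarantee, and the choices in \eqref{b19} keep $(m,p_1)\in\mathcal{A}_\nu$. Again using \eqref{b18} to trade the surplus power $y^{\lambda-1}$ against the available moment $M_\lambda(f_j)$ yields a bound of the same type as for coagulation. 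Summing up, one obtains a differential inequality
$$
\frac{\mathrm{d}}{\mathrm{d}t}\int_0^\infty x^{m_1}\Phi(f_j(t,x))\,\mathrm{d}x \le C(T,S_{p_1})\bigl(1+M_\lambda(f_j(t))\bigr)\Bigl(1+\int_0^\infty x^{m_1}\Phi(f_j(t,x))\,\mathrm{d}x\Bigr),
$$
and Gronwall's lemma together with $\int_0^T M_\lambda(f_j(s))\,\mathrm{d}s\le(\sigma+C_{\ref{cst1}}(m_1)T)/\delta_\varrho$ from Lemma~\ref{Lemb3} gives the claimed bound with $\mathcal{I}^{in}$ from \eqref{b16b} as initial datum. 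I expect the main obstacle to be the bookkeeping of exponents in the interpolation step: one must verify that every power of $x$ produced after applying \eqref{b9} and the scaling of $b$ can be split into a part controlled by a finite moment (order in $[m_1,\lambda]$, via Lemma~\ref{Lemb3} and \eqref{b7}, or order below $m_1$ via Lemma~\ref{Lemb4}) and a part absorbed into $x^{m_1}\Phi(f_j)$ or into $M_\lambda(f_j)$ via \eqref{b18}, and that the constants $\mathfrak{b}_{m,p_1}$ invoked all correspond to pairs in $\mathcal{A}_\nu$ — which is the raison d'être of the precise ranges fixed in \eqref{b19} and of the assumption \eqref{aa8}.
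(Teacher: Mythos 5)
Your overall skeleton (differentiate $\int x^{m_1}\Phi(f_j)\,\mathrm{d}x$, treat coagulation and fragmentation separately, close by Gronwall using $\int_0^T M_\lambda(f_j)\,\mathrm{d}s<\infty$ from Lemma~\ref{Lemb3}) matches the paper, but both of your key estimates contain genuine gaps. For the coagulation term, the point of the monotonicity mechanism of \cite{Buro83, LaMi02c, MiRR03} is that after subadditivity, \eqref{b17b} with $(r,s)=(f_j(t,x+y),f_j(t,x))$, and cancellation of the $\Phi(f_j(t,x))$ terms against the loss term, one is left with
\begin{equation*}
\int\!\!\int \left[ (x-y)^{m_1}K(x-y,y)\mathbf{1}_{(0,x)}(y) - x^{m_1}K(x,y) \right] \Phi_1(f_j(t,x)) f_j(t,y)\,\mathrm{d}x\,\mathrm{d}y \le 0
\end{equation*}
by the monotonicity of $x\mapsto x^{m_1}K(x,y)$ and \eqref{b17a}: the whole coagulation contribution is \emph{non-positive} and no further estimate is needed. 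Your version instead replaces $\Phi_1(f_j(t,x+y))$ by $\Phi_1(f_j(t,x))$ (which is not a legitimate pointwise step) and retains a positive remainder $C\int\!\!\int K(x,y)x^{m_1}\Phi(f_j(t,x))f_j(t,y)\,\mathrm{d}y\,\mathrm{d}x$. By \eqref{aa3} this contains $M_\alpha(f_j)\int x^{m_1+\lambda-\alpha}\Phi(f_j)\,\mathrm{d}x$, a $\Phi$-integral of strictly higher weight than the one being estimated; it is controlled neither by $\int x^{m_1}\Phi(f_j)\,\mathrm{d}x$ nor by any available moment, and the interpolation via \eqref{b18} that you sketch forces a moment of order $m_1+(\lambda-\alpha)p_1/(p_1-1)$, which exceeds $\lambda$ for the range of $p_1$ allowed by \eqref{b19}. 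So your Gronwall inequality does not close.

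The fragmentation estimate has a second, independent problem. Applying \eqref{b17b} with $(r,s)=(f_j(t,x),f_j(t,y))$ detaches $b(x,y)$ from $\Phi$, and the $\Phi_1(f_j(t,x))$ piece of your bound carries the factor $\int_x^j a(y)b(x,y)\,\mathrm{d}y = a_0 x^{\lambda-1}\int_{x/j}^1 z^{-\lambda}B(z)\,\mathrm{d}z$. Since $\nu\le 0<\lambda-1$, the pair $(-\lambda,1)$ never belongs to $\mathcal{A}_\nu$, so $\mathfrak{b}_{-\lambda,1}$ is not finite and this factor diverges as $j\to\infty$ (like $j^{\lambda-\nu-1}$ for $B=B_\nu$); the resulting Gronwall constant is not uniform in $j$. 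This is exactly why the paper applies \eqref{b17b} with the unusual choice $s=b(x,y)x^{-\delta_1}$: it puts $b$ \emph{inside} $\Phi$, so that \eqref{b18} converts it into $B(x/y)^{p_1}$ with $(m_1+\delta_1(1-p_1),p_1)\in\mathcal{A}_\nu$, while the $\Phi_1(f_j(t,x))$ piece then comes with the harmless weight $x^{m_1+\delta_1}$ and produces the moment $M_{\lambda-1+\delta_1}(f_j)$, bounded by interpolation between $M_{m_1}$ and $M_1$. Your residual term $a_0\mathfrak{b}_{m_1,1}\int y^{m_1+\lambda-1}\Phi(f_j(t,y))\,\mathrm{d}y$ likewise has a weight exceeding $m_1$ and is not absorbed by the argument you describe. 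In short: the roles of $\delta_1$ and $p_1$ in \eqref{b19} are to tame the small-size singularity of $B$ inside $\Phi$, not to match exponents against $M_\lambda\in L^1(0,T)$, and without that device (and without the non-positivity of the coagulation term) the proof does not go through.
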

%%%%%%%%%%%%%%%%

In the proof of Lemma~\ref{Lemb6}, we first exploit the monotonicity of the coagulation kernel, the subadditivity of the weight function $x\mapsto x^{m_1}$, and the specific choice \eqref{b1} of the truncation of $K$ to show that the contribution of the coagulation term is non-positive, a property which has been uncovered in \cite{Buro83} and subsequently used in \cite{BLLxx, Dubo94b, LaMi02c, MiRR03}. The analysis of the contribution of the fragmentation term is more delicate and the specific choice of the additional parameters $\delta_1$ and $p_1$ come into play there.

\begin{proof}
Let $t\in [0,T]$. On the one hand, it follows from Fubini's theorem, the symmetry of $K$, and the subadditivity of $x\mapsto x^{m_1}$ that
\begin{align*}
X_j(t) & := \int_0^j x^{m_1} \Phi'(f_j(t,x)) \mathcal{C}_j f_j(t,x)\ \mathrm{d}x \\
& = \frac{1}{2} \int_0^j \int_0^{j-y} (x+y)^{m_1} K(x,y) \Phi'(f_j(t,x+y)) f_j(t,y) f_j(t,x)\ \mathrm{d}x\mathrm{d}y \\
& \qquad - \int_0^j \int_0^j x^{m_1} K(x,y) \Phi'(f_j(t,x)) f_j(t,y) f_j(t,x)\ \mathrm{d}x\mathrm{d}y \\
& \le \frac{1}{2} \int_0^j \int_0^{j-y} \left( x^{m_1} + y^{m_1} \right) K(x,y) \Phi'(f_j(t,x+y)) f_j(t,y) f_j(t,x)\ \mathrm{d}x\mathrm{d}y \\
& \qquad - \int_0^j \int_0^j x^{m_1} K(x,y) \Phi'(f_j(t,x)) f_j(t,y) f_j(t,x)\ \mathrm{d}x\mathrm{d}y \\
& \le \int_0^j \int_0^{j-y} x^{m_1} K(x,y) \Phi'(f_j(t,x+y)) f_j(t,y) f_j(t,x)\ \mathrm{d}x\mathrm{d}y \\
& \qquad - \int_0^j \int_0^j x^{m_1} K(x,y) \Phi'(f_j(t,x)) f_j(t,y) f_j(t,x)\ \mathrm{d}x\mathrm{d}y \ .
\end{align*}
Owing to \eqref{b17b} (with $r=f_j(t,x+y)$ and $s=f_j(t,x)$), we further obtain
\begin{align*}
X_j(t) & \le \int_0^j \int_0^{j-y} x^{m_1} K(x,y) \Phi_1(f_j(t,x+y)) f_j(t,y)\ \mathrm{d}x\mathrm{d}y \\
& \qquad + \int_0^j \int_0^{j-y} x^{m_1} K(x,y) \Phi(f_j(t,x)) f_j(t,y)\ \mathrm{d}x\mathrm{d}y \\
& \qquad - \int_0^j \int_0^j x^{m_1} K(x,y) \Phi'(f_j(t,x)) f_j(t,y) f_j(t,x)\ \mathrm{d}x\mathrm{d}y \\
& \le \int_0^j \int_y^j (x-y)^{m_1} K(x-y,y) \Phi_1(f_j(t,x)) f_j(t,y)\ \mathrm{d}x\mathrm{d}y \\
& \qquad - \int_0^j \int_0^j x^{m_1} K(x,y) \Phi_1(f_j(t,x)) f_j(t,y)\ \mathrm{d}x\mathrm{d}y \\
& \le \int_0^j \int_0^j \left[ (x-y)^{m_1} K(x-y,y) \mathbf{1}_{(0,x)}(y) - x^{m_1} K(x,y) \right] \Phi_1(f_j(t,x)) f_j(t,y)\ \mathrm{d}x\mathrm{d}y\ .
\end{align*}
We finally infer from the monotonicity of $x\mapsto x^{m_1} K(x,y)$ and the non-negativity \eqref{b17a} of $\Phi_1$ that 
\begin{equation}
X_j(t) = \int_0^j x^{m_1} \Phi'(f_j(t,x)) \mathcal{C}_j f_j(t,x)\ \mathrm{d}x \le 0\ . \label{b20}
\end{equation}

On the other hand, using again Fubini's theorem and \eqref{b17b} (with $r=f_j(t,x)$ and $s=b(x,y)x^{-\delta_1}$),
\begin{align*}
Y_j(t) & := \int_0^j x^{m_1} \Phi'(f_j(t,x)) \mathcal{F}_j f_j(t,x)\ \mathrm{d}x \\
& \le \int_0^j a(y) f_j(t,y) \int_0^y x^{m_1+\delta_1} \frac{b(x,y)}{x^{\delta_1}} \Phi'(f_j(t,x))\ \mathrm{d}x\mathrm{d}y \\
& \le \int_0^j a(y) f_j(t,y) \int_0^y x^{m_1+\delta_1} \Phi_1(f_j(t,x))\ \mathrm{d}x\mathrm{d}y + Z_j(t)\ ,
\end{align*}
where 
\begin{equation*}
Z_j(t) := \int_0^j a(y) f_j(t,y) \int_0^y x^{m_1+\delta_1} \Phi\left( b(x,y) x^{-\delta_1} \right)\ \mathrm{d}x\mathrm{d}y \ .
\end{equation*}
Since $p_1\in (1,2)$, we infer from \eqref{aa2}, \eqref{aa4}, and \eqref{b18} that
\begin{align*}
Z_j(t) & \le S_{p_1} \int_0^j a(y) y^{-p_1} f_j(t,y) \int_0^y x^{m_1+\delta_1(1-p_1)} \left[ B\left( \frac{x}{y} \right) \right]^{p_1}\ \mathrm{d}x\mathrm{d}y \\
& \le S_{p_1} \mathfrak{b}_{m_1+\delta_1(1-p_1),p_1} \int_0^y a(y) y^{m_1+(1+\delta_1)(1-p_1)} f_j(t,y)\ \mathrm{d}y \\
& \le a_0 S_{p_1} \mathfrak{b}_{m_1+\delta_1(1-p_1),p_1} M_{\lambda-1+m_1+(1+\delta_1)(1-p_1)}(f_j(t))\ .
\end{align*}
At this point, we notice that, due to \eqref{b19} and the non-positivity of $\nu$,
\begin{align*}
m_1+ \delta_1 (1-p_1) & \ge m_1+\delta_1(1-p_1) + p_1 \nu = m_1 + \nu + (\delta_1-\nu)(1-p_1) \\
& > m_1+\nu - (m_1+\nu+1) = -1\ ,
\end{align*}
so that $(m_1+\delta_1(1-p_1),p_1)\in \mathcal{A}_\nu$ and $\mathfrak{b}_{m_1+\delta_1(1-p_1),p_1}$ is finite. Consequently, we deduce from \eqref{aa2}, \eqref{b17a}, and the estimate on $Z_j(t)$ that
\begin{align*}
Y_j(t) & \le a_0 M_{\lambda-1+\delta_1}(f_j(t)) \int_0^j x^{m_1} \Phi(f_j(t,x))\ \mathrm{d}x \\
& \qquad + a_0 S_{p_1} \mathfrak{b}_{m_1+\delta_1(1-p_1),p_1} M_{\lambda-1+m_1+(1+\delta_1)(1-p_1)}(f_j(t))\ .
\end{align*}
Moreover, using once more \eqref{b19}, we realize that
\begin{equation*}
\lambda-1+m_1+(1+\delta_1)(1-p_1) \in [m_1,\lambda] \;\text{ and }\; \lambda-1+\delta_1 \in [m_1,1]\ ,
\end{equation*}
and we infer from \eqref{b7}, Lemma~\ref{Lemb3} (with $m=m_1$), and H\"older's and Young's inequalities that 
\begin{align*}
M_{\lambda-1+m_1+(1+\delta_1)(1-p_1)}(f_j(t)) & \le \frac{\lambda-1+(1+\delta_1)(1-p_1)}{\lambda-m_1} M_\lambda(f_j(t)) \\
& \qquad + \frac{1-m_1 + (1+\delta_1)(p_1-1)}{\lambda-m_1} M_{m_1}(f_j(t)) \\
& \le M_\lambda(f_j(t)) + C(T)\ ,
\end{align*}
and
\begin{equation*}
M_{\lambda-1+\delta_1}(f_j(t)) \le \varrho^{(\lambda-1+\delta_1-m_1)/(1-m_1)} M_{m_1}(f_j(t))^{(2-\lambda-\delta_1)/(1-m_1)} \le C(T)\ . 
\end{equation*}
Consequently, \refstepcounter{NumConst}\label{cst8}
\begin{equation}
Y_j(t) \le C_{\ref{cst8}}(T,S_{p_1}) \left( 1 + M_\lambda(f_j(t)) + \int_0^j x^{m_1} \Phi(f_j(t,x))\ \mathrm{d}x \right)\ . \label{b21}
\end{equation}

It now follows from \eqref{b4a}, \eqref{b20}, and \eqref{b21} that, for $t\in [0,T]$,
\begin{equation*}
\frac{\mathrm{d}}{\mathrm{d}t} \int_0^\infty x^{m_1} \Phi(f_j(t,x))\ \mathrm{d}x \le C_{\ref{cst8}}(T,S_{p_1}) \left( 1 + M_\lambda(f_j(t)) + \int_0^\infty x^{m_1} \Phi(f_j(t,x))\ \mathrm{d}x \right)\ ,
\end{equation*}
hence, after integration with respect to time,
\begin{align*}
\int_0^\infty x^{m_1} \Phi(f_j(t,x))\ \mathrm{d}x & \le e^{C_{\ref{cst8}}(T,S_{p_1}) t} \int_0^\infty x^{m_1} \Phi(f_j^{in})\ \mathrm{d}x \\
& \qquad + C_{\ref{cst8}}(T,S_{p_1}) e^{C_{\ref{cst8}}(T,S_{p_1}) t} \left( t + \int_0^t M_\lambda(f_j(s))\ \mathrm{d}s \right)\ .
\end{align*}
We then use \eqref{b3}, \eqref{b16b}, Lemma~\ref{Lemb3} (with $m=m_1$)), and the monotonicity of $\Phi$ to complete the proof.
\end{proof}

%%%%%%%%%%%%%%%%
%%%%%%%%%%%%%%%%
\subsection{Time Equicontinuity} \label{sec2.5}
%%%%%%%%%%%%%%%%
%%%%%%%%%%%%%%%%

According to the Dunford-Pettis theorem, the estimates derived in Sections~\ref{sec2.3} and~\ref{sec2.4} along with the superlinearity \eqref{b16a} of $\Phi$ at infinity imply the weak compactness in $X_{m_1}$ of the sequence $(f_j(t))_j$ for all $t\ge 0$. To show the convergence of the sequence $(f_j)_j$, we are left with the compactness with respect to the time variable which we establish now.

%%%%%%%%%%%%%%%%
\begin{lemma}\label{Lemb7}
\refstepcounter{NumConst}\label{cst9} For $T>0$, there exists $C_{\ref{cst9}}(T)>0$ depending on $T$ such that
\begin{equation*}
\int_0^\infty x^{m_1} |f_j(t_2,x) - f_j(t_1,x)|\ \mathrm{d}x \le C_{\ref{cst9}}(T) \omega_0(t_2-t_1)\ , \qquad 0\le t_1\le t_2\le T\ ,
\end{equation*} 
with $\omega_0(s) := \max\left\{ s^{(1-m_1)/(2-m_1)} , s \right\}$, $s\ge 0$.
\end{lemma}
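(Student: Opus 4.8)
The plan is to estimate the $X_{m_1}$-norm of the increment $f_j(t_2)-f_j(t_1)$ by integrating the evolution equation \eqref{b4a} between $t_1$ and $t_2$ and bounding the contributions of the coagulation and fragmentation operators separately. Writing
\begin{equation*}
\int_0^\infty x^{m_1} |f_j(t_2,x)-f_j(t_1,x)|\ \mathrm{d}x \le \int_{t_1}^{t_2} \int_0^\infty x^{m_1} \left( |\mathcal{C}_j f_j(s,x)| + |\mathcal{F}_j f_j(s,x)| \right)\ \mathrm{d}x\mathrm{d}s\ ,
\end{equation*}
I would first treat the coagulation part. Using the subadditivity of $x\mapsto x^{m_1}$ to control $(x+y)^{m_1}$ in the gain term by $x^{m_1}+y^{m_1}$, together with Fubini's theorem and the upper bound \eqref{b9} on $K$, the double integral of $x^{m_1}|\mathcal{C}_j f_j|$ is bounded by a finite sum of products of moments $M_p(f_j(s))$ with indices $p$ lying between $m_1$ and $\lambda$ (roughly, $M_{m_1+\lambda-1}$ against $M_{m_1}$, $M_\lambda$ against $M_1$, and so on); by H\"older's inequality and \eqref{b7} these are all controlled by $1+M_\lambda(f_j(s))$. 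Likewise, the fragmentation contribution $\int_0^\infty x^{m_1}|\mathcal{F}_j f_j(s,x)|\,\mathrm{d}x$ splits into the loss term $a_0 M_{m_1+\lambda-1}(f_j(s))$ and the gain term, which after the scaling change of variables in \eqref{aa4} equals $a_0 \mathfrak{b}_{m_1,1} M_{m_1+\lambda-1}(f_j(s))$ — finite since $(m_1,1)\in\mathcal{A}_\nu$ by \eqref{aa11b} and \eqref{aa11} — and is again dominated by $1+M_\lambda(f_j(s))$ via H\"older. Collecting these bounds yields
\begin{equation*}
\int_0^\infty x^{m_1} |f_j(t_2,x)-f_j(t_1,x)|\ \mathrm{d}x \le C \int_{t_1}^{t_2} \left( 1 + M_\lambda(f_j(s)) \right)\ \mathrm{d}s\ .
\end{equation*}

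The next step is to turn this time integral of $M_\lambda(f_j)$ into the stated modulus of continuity $\omega_0(t_2-t_1)$. Here the two regimes arise naturally: on the one hand, Lemma~\ref{Lemb3} (with $m=m_1$) gives the global bound $\int_0^T M_\lambda(f_j(s))\,\mathrm{d}s \le (\sigma + C_{\ref{cst1}}(m_1)T)/\delta_\varrho$, so by H\"older's inequality with exponents $1/(1-m_1+m_1/(2-m_1))$... — more simply, writing $M_\lambda(f_j) = M_\lambda(f_j)^{1/(2-m_1)} M_\lambda(f_j)^{(1-m_1)/(2-m_1)}$ and using H\"older on $(t_1,t_2)$ with the uniform bound $M_\lambda(f_j)\le M_\lambda(f_j)$ is not quite it. The cleanest route: apply H\"older's inequality directly,
\begin{equation*}
\int_{t_1}^{t_2} M_\lambda(f_j(s))\ \mathrm{d}s \le (t_2-t_1)^{1/(2-m_1)} \left( \int_{t_1}^{t_2} M_\lambda(f_j(s))^{(2-m_1)/(1-m_1)}\ \mathrm{d}s \right)^{(1-m_1)/(2-m_1)}\ ,
\end{equation*}
but this requires an $L^{(2-m_1)/(1-m_1)}$-in-time bound on $M_\lambda(f_j)$ which Lemma~\ref{Lemb3} does not directly supply. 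I expect this matching of exponents to be the main obstacle, and the resolution is to use interpolation in the \emph{size} variable before integrating in time: combine the $L^1_t$ bound on $M_\lambda(f_j)$ from Lemma~\ref{Lemb3} with the pointwise-in-time bound $M_1(f_j(t))\le\varrho$ from \eqref{b7} via H\"older, $M_\lambda(f_j) \le M_1(f_j)^{?}\cdots$ — rather, split $M_\lambda(f_j(s)) \le R^{\lambda-1}\varrho + M_\lambda(f_j(s))\mathbf{1}_{\{M_\lambda(f_j(s))\ge R^{\lambda-1}\varrho\}}$ for a cutoff $R$ to be chosen as a function of $t_2-t_1$, bounding the small-$M_\lambda$ contribution by $(t_2-t_1)R^{\lambda-1}\varrho$ and the large-$M_\lambda$ contribution by $\int_0^T M_\lambda(f_j(s))\,\mathrm{d}s \le C(T)$ outright; optimizing $R$ over $t_2-t_1$ produces precisely the exponent $(1-m_1)/(2-m_1)$, hence the term $s^{(1-m_1)/(2-m_1)}$ in $\omega_0$. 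The other branch $\omega_0(s)=s$ simply absorbs the $\int_{t_1}^{t_2} 1\,\mathrm{d}s = t_2-t_1$ term, and taking the maximum gives $\omega_0$.

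Finally I would collect the constants: the constant $C_{\ref{cst9}}(T)$ depends only on the quantities listed at the start of Section~\ref{sec2} together with $T$, through $\delta_\varrho$, $\sigma$, $C_{\ref{cst1}}(m_1)$, $\mathfrak{b}_{m_1,1}$, $c_{m_1}$ and $\varrho$. The only delicate point to double-check is that every moment index appearing after bounding the coagulation and fragmentation integrals indeed lies in $[m_1,\lambda]$ (so that H\"older interpolation between $M_{m_1}$, $M_1$ and $M_\lambda$ with the controlled bounds $M_{m_1}(f_j)\le C(T)$ from Lemma~\ref{Lemb3}, $M_1(f_j)\le\varrho$ from \eqref{b7}, and the $L^1_t$ bound on $M_\lambda(f_j)$ is legitimate); this uses $m_1\ge 2-\lambda$ from \eqref{aa11b} exactly as in the proof of Lemma~\ref{Lemb6}. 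With these verifications the estimate and hence the lemma follow.
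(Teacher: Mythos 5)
Your reduction of the increment to $\int_{t_1}^{t_2}\big(1+M_\lambda(f_j(s))\big)\,\mathrm{d}s$ is where the argument breaks down. The only control available on $M_\lambda(f_j)$ is the $L^1$-in-time bound from Lemma~\ref{Lemb3}, and an $L^1$ bound on $[0,T]$ yields no modulus of continuity for $\int_{t_1}^{t_2}M_\lambda(f_j(s))\,\mathrm{d}s$ in terms of $t_2-t_1$: the whole mass of that time integral could sit in an arbitrarily short interval. Your proposed rescue does not repair this: after splitting $M_\lambda(f_j(s))\le R^{\lambda-1}\varrho+(\text{remainder})$, you bound the remainder's time integral by $\int_0^T M_\lambda(f_j(s))\,\mathrm{d}s\le C(T)$ \emph{with no decay in $R$}, so the quantity to be optimized is $(t_2-t_1)R^{\lambda-1}\varrho+C(T)$, whose infimum over $R>1$ is of order $C(T)$, not $\omega_0(t_2-t_1)$. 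The paper places the cutoff elsewhere: it splits the \emph{left-hand side} into $\int_0^R$ and $\int_R^\infty$ (the tail being $\le 2\varrho R^{m_1-1}$ by \eqref{b7}), and arranges the estimate of $\int_0^R x^{m_1}|\partial_t f_j|\,\mathrm{d}x$ so that $M_\lambda(f_j(t))$ enters only through the fragmentation gain coming from sizes $y>R$, hence with the prefactor $R^{m_1-1}$, while every other contribution is bounded by $C(T)R$ \emph{uniformly in $t$}. Integrating in time then gives $C(T)\big(R(t_2-t_1)+R^{m_1-1}\big)$, and the choice $R=(t_2-t_1)^{-1/(2-m_1)}$ produces $\omega_0$. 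The smallness as $t_2-t_1\to 0$ thus comes from the factor $R^{m_1-1}$ multiplying the $M_\lambda$ term, not from the time integral of $M_\lambda$ over a short interval being small --- which it need not be.

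A second, independent problem is your treatment of the coagulation term over all of $(0,\infty)$: testing $\mathcal{C}_jf_j$ against $x^{m_1}$ produces $M_{m_1+\alpha}(f_j)M_{\lambda-\alpha}(f_j)$ and $M_{m_1+\lambda-\alpha}(f_j)M_\alpha(f_j)$, and since \eqref{aa11b} allows $m_1>\alpha$, the orders $m_1+\alpha$ and $m_1+\lambda-\alpha$ may exceed $\lambda$; no estimate on such moments is available here (Lemma~\ref{Lemb5} requires the extra hypothesis $f^{in}\in X_m$). The paper sidesteps this by restricting the coagulation integral to $x<R$ and bounding $x^{m_1}\le R^{m_1}$, which leaves only $M_\alpha(f_j)M_{\lambda-\alpha}(f_j)$ with orders in $[m_0,1]$, controlled uniformly in time by \eqref{b7} together with Lemma~\ref{Lemb3} or Lemma~\ref{Lemb4}.
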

%%%%%%%%%%%%%%%%

\begin{proof}
Let $t\in [0,T]$ and $R>1$. On the one hand, we infer from Fubini's theorem that 
\begin{align*}
X_j(t,R) & := \int_0^R x^{m_1} |\mathcal{C}_j f_j(t,x)|\ \mathrm{d}x \\
& \le \frac{1}{2} \int_0^R \int_0^{R-y} (x+y)^{m_1} K(x,y) f_j(t,x) f_j(t,y)\ \mathrm{d}x\mathrm{d}y \\
& \qquad + \int_0^R \int_0^j x^{m_1} K(x,y) f_j(t,x) f_j(t,y)\ \mathrm{d}y\mathrm{d}x\ .
\end{align*}
Using the subadditivity of $x\mapsto x^{m_1}$ and \eqref{aa3}, we further obtain
\begin{align*}
X_j(t,R) & \le \int_0^R \int_0^{R-y} x^{m_1} K(x,y) f_j(t,x) f_j(t,y)\ \mathrm{d}x\mathrm{d}y \\
& \qquad + \int_0^R \int_0^j x^{m_1} K(x,y) f_j(t,x) f_j(t,y)\ \mathrm{d}y\mathrm{d}x \\
& \le 2K_0 \int_0^R \int_0^j x^{m_1} \left( x^\alpha y^{\lambda-\alpha} + x^{\lambda-\alpha} y^\alpha \right) f_j(t,x) f_j(t,y)\ \mathrm{d}y\mathrm{d}x \\
& \le 4 K_0 R^{m_1} M_\alpha(f_j(t)) M_{\lambda-\alpha}(f_j(t))\ .
\end{align*}
Since $m_0\le \alpha \le \lambda-\alpha\le 1$ by \eqref{aa1} and \eqref{aa11}, we deduce from \eqref{b00}, \eqref{b7}, H\"older's inequality, and, either Lemma~\ref{Lemb3} (with $m=m_1$) if $m_1=m_0$, or Lemma~\ref{Lemb4} (with $m=m_0$) if $m_1>m_0$,  that
\begin{equation}
X_j(t,R) \le 4 K_0 R^{m_1} M_1(f_j(t))^{(\lambda-2m_0)/(1-m_0)} M_{m_0}(f_j(t))^{(2-\lambda)/(1-m_0)} \le C(T) R\ . \label{b22}
\end{equation}
On the other hand, using \eqref{aa2}, \eqref{aa4}, \eqref{aa6}, Fubini's theorem, and $(m_1,1)\in \mathcal{A}_\nu$ (see \eqref{aa11b} and Remark~\ref{rema1}), we find 
\begin{align*}
Y_j(t,R) & := \int_0^R x^{m_1} |\mathcal{F}_j f_j(t,x)|\ \mathrm{d}x \\
& \le a_0 \int_0^R x^{m_1+\lambda-1} f_j(t,x)\ \mathrm{d}x + a_0 \mathfrak{b}_{m_1,1} \int_0^\infty y^{m_1+\lambda-1} f_j(t,y)\ \mathrm{d}y \\
& \le a_0 (1+\mathfrak{b}_{m_1,1}) R^{m_1+\lambda-2} M_1(f_j(t)) + a_0 \mathfrak{b}_{m_1,1} R^{m_1-1} \int_R^\infty y^\lambda f_j(t,y)\ \mathrm{d}y\ .
\end{align*}
Hence, thanks to \eqref{aa1}, \eqref{aa11b}, and \eqref{b7}, 
\begin{equation}
Y_j(t,R) \le C \left( R + R^{m_1-1} M_\lambda(f_j(t)) \right)\ . \label{b23}
\end{equation}
We now combine \eqref{b4a}, \eqref{b22}, and \eqref{b23} to conclude that, for $t\in [0,T]$, 
\begin{equation*}
\int_0^R x^{m-1} |\partial_t f_j(t,x)|\ \mathrm{d}x \le X_j(t,R) + Y_j(t,R) \le C(T) \left( R + R^{m_1-1} M_\lambda(f_j(t)) \right)\ .
\end{equation*}
Therefore, for $0\le t_1 \le t_2 \le T$, we deduce from \eqref{b00}, Lemma~\ref{Lemb3} (with $m=m_1$), and Fubini's theorem that
\refstepcounter{NumConst}\label{cst10}
\begin{align}
\int_0^R x^{m_1} |f_j(t_2,x)-f_j(t_1,x)|\ \mathrm{d}x & \le \int_0^R \int_{t_1}^{t_2} x^{m-1} |\partial_t f_j(t,x)|\ \mathrm{d}t \mathrm{d}x \nonumber \\
& \le C(T) \left( R (t_2-t_1) + R^{m_1-1} \int_{t_1}^{t_2} M_\lambda(f_j(s))\ \mathrm{d}s \right) \nonumber \\
& \le C_{\ref{cst10}}(T) \left( R (t_2-t_1) + R^{m_1-1} \right) \label{b24}
\end{align}
. It then follows from \eqref{aa11b}, \eqref{b7}, and \eqref{b24} that, for $0\le t_1 \le t_2 \le T$ and $R>1$,
\begin{align*}
\int_0^\infty x^{m_1} |f_j(t_2,x)-f_j(t_1,x)|\ \mathrm{d}x & \le \int_0^R x^{m_1} |f_j(t_2,x)-f_j(t_1,x)|\ \mathrm{d}x \\
& \qquad + R^{m_1-1} \int_R^\infty x \left[ f_j(t_1,x)+f_j(t_2,x) \right]\ \mathrm{d}x \\
& \le C_{\ref{cst10}}(T) \left( R (t_2-t_1) + R^{m_1-1} \right) + 2 \varrho R^{m_1-1}\ .
\end{align*}
The previous inequality being valid for any $R>1$, we may choose $R=(t_2-t_1)^{-1/(2-m_1)}$ when $t_2-t_1<1$ and $R=2$ otherwise and thereby complete the proof of Lemma~\ref{Lemb7}.
\end{proof}

%%%%%%%%%%%%%%%%
\begin{corollary}\label{Corb8}
\refstepcounter{NumConst}\label{cst11} For $T>0$, there exists $C_{\ref{cst11}}(T)>0$ depending on $T$ such that
\begin{equation*}
\int_0^\infty x |f_j(t_2,x) - f_j(t_1,x)|\ \mathrm{d}x \le C_{\ref{cst11}}(T) \omega(t_2-t_1)\ , \qquad 0\le t_1\le t_2\le T\ ,
\end{equation*} 
where
\begin{equation*}
\omega(s) := \inf_{R>1}\left\{ R^{1-m_1} \omega_0(s) + \frac{1}{\ln{R}} \right\}\ , \qquad s\ge 0\ .
\end{equation*}
\end{corollary}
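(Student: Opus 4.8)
The plan is to \emph{interpolate} between the time-equicontinuity estimate in $X_{m_1}$ furnished by Lemma~\ref{Lemb7} and the uniform tail control for large sizes already obtained while proving Lemma~\ref{Lemb3}. Concretely, for $0\le t_1\le t_2\le T$ and a parameter $R>1$ to be optimised at the end, I would split
\[
\int_0^\infty x |f_j(t_2,x) - f_j(t_1,x)|\ \mathrm{d}x = \int_0^R x |f_j(t_2,x) - f_j(t_1,x)|\ \mathrm{d}x + \int_R^\infty x |f_j(t_2,x) - f_j(t_1,x)|\ \mathrm{d}x
\]
and estimate the two pieces separately, the first using the low-order moment control and the second using the $x|\ln x|$-bound.

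For the small-size contribution, since $m_1<1$ by \eqref{aa11b} one has $x = x^{m_1} x^{1-m_1} \le R^{1-m_1} x^{m_1}$ for $x\in(0,R)$, whence
\[
\int_0^R x |f_j(t_2,x) - f_j(t_1,x)|\ \mathrm{d}x \le R^{1-m_1} \int_0^\infty x^{m_1} |f_j(t_2,x) - f_j(t_1,x)|\ \mathrm{d}x \le C_{\ref{cst9}}(T)\, R^{1-m_1}\, \omega_0(t_2-t_1)
\]
by Lemma~\ref{Lemb7}. For the large-size contribution I would simply bound it by the sum of the two tails $Q_j(t_1,R)+Q_j(t_2,R)$ and invoke the estimate \eqref{b13} (itself a consequence of \eqref{b00} and Lemma~\ref{Lemb3} with $m=m_1$), which is uniform in $t\in[0,T]$ since the bound $(\sigma+C_{\ref{cst1}}(m_1)t)/\ln R$ is affine in $t$:
\[
\int_R^\infty x |f_j(t_2,x) - f_j(t_1,x)|\ \mathrm{d}x \le Q_j(t_1,R) + Q_j(t_2,R) \le \frac{2\sigma + 2 C_{\ref{cst1}}(m_1) T}{\ln R}\ .
\]

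Adding the two contributions and setting $C_{\ref{cst11}}(T) := \max\{ C_{\ref{cst9}}(T),\, 2\sigma + 2 C_{\ref{cst1}}(m_1) T\}$ gives, for \emph{every} $R>1$,
\[
\int_0^\infty x |f_j(t_2,x) - f_j(t_1,x)|\ \mathrm{d}x \le C_{\ref{cst11}}(T) \left( R^{1-m_1} \omega_0(t_2-t_1) + \frac{1}{\ln R} \right)\ ,
\]
and taking the infimum over $R>1$ yields precisely $C_{\ref{cst11}}(T)\, \omega(t_2-t_1)$, which is the claim. There is essentially no obstacle: both ingredients are already available, and the only mild point of care is to note that $\omega$ is a genuine modulus of continuity, i.e. $\omega(s)\to 0$ as $s\to 0^+$ — this follows by choosing, say, $R = \exp(\omega_0(s)^{-1/2})$ in the infimum, so that both $R^{1-m_1}\omega_0(s)$ and $1/\ln R$ tend to $0$, which makes the corollary an effective time-equicontinuity statement in $X_1$ on the sequence $(f_j)_j$.
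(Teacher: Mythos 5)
Your proof is correct and follows essentially the same route as the paper: split the integral at $R$, use $x\le R^{1-m_1}x^{m_1}$ on $(0,R)$ together with Lemma~\ref{Lemb7}, control the tail via the uniform $x|\ln x|$-bound of Lemma~\ref{Lemb3} (the paper writes $x\le x|\ln x|/\ln R$ directly, you quote the equivalent estimate \eqref{b13}), and take the infimum over $R>1$. One small slip in your closing aside, which is not needed for the corollary itself but matters where it is used later: with $R=\exp(\omega_0(s)^{-1/2})$ the term $R^{1-m_1}\omega_0(s)=\exp\bigl((1-m_1)\omega_0(s)^{-1/2}\bigr)\,\omega_0(s)$ blows up as $s\to 0^+$ rather than vanishing; a choice such as $R=\omega_0(s)^{-1/(2(1-m_1))}$ does give $R^{1-m_1}\omega_0(s)=\omega_0(s)^{1/2}\to 0$ and $1/\ln R\to 0$, so $\omega(s)\to 0$ as claimed.
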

%%%%%%%%%%%%%%%%

\begin{proof}
Consider $0\le t_1\le t_2\le T$ and $R>1$. We infer from \eqref{aa11b}, \eqref{b00}, Lemma~\ref{Lemb3} (with $m=m_1$), and Lemma~\ref{Lemb7} that
\begin{align*}
\int_0^\infty x |f_j(t_2,x) - f_j(t_1,x)|\ \mathrm{d}x & \le R^{1-m_1} \int_0^R x^{m_1} |f_j(t_2,x) - f_j(t_1,x)|\ \mathrm{d}x \\ 
& \qquad + \frac{1}{\ln{R}} \int_R^\infty x |\ln{x}| \left[ f_j(t_1,x)+f_j(t_2,x) \right]\ \mathrm{d}x \\
& \le C_{\ref{cst9}}(T) R^{1-m_1} \omega_0(t_2-t_1) + \frac{C(T)}{\ln{R}}\ ,
\end{align*}
and Corollary~\ref{Corb8} readily follows since the previous inequality is valid for all $R>1$.
\end{proof}

%%%%%%%%%%%%%%%%
%%%%%%%%%%%%%%%%
\subsection{Convergence} \label{sec2.6}
%%%%%%%%%%%%%%%%
%%%%%%%%%%%%%%%%

We are now in a position to complete the proof of the first statements in Theorem~\ref{ThmP1}.

\begin{proof}[Proof of Theorem~\ref{ThmP1}~(a): Existence]
Let $T>0$. Since $x\mapsto x|\ln{x}|$ and $\Phi$ are superlinear at infinity, it follows from Lemma~\ref{Lemb3} (with $m=m_1$), Lemma~\ref{Lemb6}, and the Dunford-Pettis theorem that there is a weakly compact subset $\mathcal{K}_T$ of $X_{m_1}\cap X_1$ such that $f_j(t)\in \mathcal{K}_T$ for all $t\in [0,T]$ and $j\ge 1$. In addition, the function $\omega$ defined in Corollary~\ref{Corb8} satisfies $\omega(s)\to 0$ as $s\to 0$, so that the sequence $(f_j)_{j}$ is equicontinuous in $X_{m_1}\cap X_1$ at any $t\in [0,T]$ by Lemma~\ref{Lemb7} and Corollary~\ref{Corb8}. According to a variant of the Arzel\`a-Ascoli theorem \cite[Theorem~A.3.1]{Vrab03}, these properties imply the compactness of the sequence $(f_j)_{j}$ in $C([0,T],X_{m_1,w})$ and in $C([0,T],X_{1,w})$, recalling that $X_{m,w}$ denotes the space $X_m$ endowed with its weak topology. A diagonal process then allows us to construct a subsequence of $(f_j)_{j}$ (not relabeled) and a non-negative function $f\in C([0,\infty),X_{m_1,w}\cap X_{1,w})$ such that
\begin{equation}
f_j \longrightarrow f \;\text{ in }\; C([0,T],X_{m_1,w}\cap X_{1,w}) \;\text{ for all }\; T>0\ . \label{b25}
\end{equation}
A first consequence of, either \eqref{b25} if $m_1=m_0$, or \eqref{b25}, Lemma~\ref{Lemb4} (with $m=m_0$), and Fatou's lemma if $m_0<m_1$, is that
\begin{equation}
f\in L^\infty((0,T),X_{m_0})  \;\text{ for all }\; T>0\ . \label{b26}
\end{equation}
Consider next $t>0$. Thanks to \eqref{b00} and Lemma~\ref{Lemb3} (with $m=m_1$), \refstepcounter{NumConst}\label{cst12}
\begin{align}
\int_0^t \int_0^j \int_{j-y}^j x K(x,y) f_j(s,x) f_j(s,y)\ \mathrm{d}x\mathrm{d}y\mathrm{d}s & \le \frac{\sigma+C_{\ref{cst1}}(m_1) t}{\ln{j}}\ , \label{b28} \\
\int_0^t M_\lambda(f_j(s))\ \mathrm{d}s & \le C_{\ref{cst12}}(t) :=\frac{\sigma+C_{\ref{cst1}}(m_1) t}{\delta_\varrho}\ . \label{b29}
\end{align}
On the one hand, we may pass to the limit as $j\to\infty$ in \eqref{b6} with the help of \eqref{b3}, \eqref{b25}, and \eqref{b28} to conclude that
\begin{equation}
M_1(f(t)) = M_1(f^{in})\ , \qquad t\ge 0\ . \label{b27}
\end{equation}
On the other hand, we infer from \eqref{b25}, \eqref{b29}, and Fatou's lemma that
\begin{equation}
\int_0^t M_\lambda(f(s))\ \mathrm{d}s \le C_{\ref{cst12}}(t)\ , \qquad t\ge 0\ . \label{b30}
\end{equation}

We are left with showing that $f$ satisfies the weak formulation \eqref{b0} of \eqref{a1} for all $\vartheta\in \Theta_{m_1}$. Consider thus $\vartheta\in \Theta_{m_1}$ and observe that, sice $\Theta_{m_1}\subset \Theta_{m_0}$, it follows from \eqref{b5} after integration with respect to time that
\begin{align}
\int_0^\infty \vartheta(x) (f_j-f)(t,x)\ \mathrm{d}x & = \frac{1}{2} \int_0^t \int_0^\infty \int_0^\infty K(x,y) \chi_\vartheta(x,y) f_j(s,x) f_j(s,y)\ \mathrm{d}y\mathrm{d}x\mathrm{d}s \nonumber\\
& \qquad - \int_0^t \int_0^\infty a(x) N_\vartheta(x) f_j(s,x)\ \mathrm{d}x\mathrm{d}s \label{b31} \\
& \qquad - \frac{1}{2} \int_0^t \int_0^j \int_{j-y}^j K(x,y) \vartheta(x+y) f_j(s,x) f_j(s,y)\ \mathrm{d}x\mathrm{d}y\mathrm{d}s\ . \nonumber
\end{align}
First, since $\vartheta\in \Theta_{m_1}$, we infer from \eqref{aa11b} and \eqref{b28} that
\begin{align*}
& \frac{1}{2} \int_0^t \int_0^j \int_{j-y}^j K(x,y) \vartheta(x+y) f_j(s,x) f_j(s,y)\ \mathrm{d}x\mathrm{d}y\mathrm{d}s \\
& \qquad \le \frac{\|\vartheta\|_{C^{0,m_1}}}{2} \int_0^t \int_0^j \int_{j-y}^j (x+y)^{m_1} K(x,y) f_j(s,x) f_j(s,y)\ \mathrm{d}x\mathrm{d}y\mathrm{d}s \\
& \qquad \le \frac{\|\vartheta\|_{C^{0,m_1}}}{2 j^{1-m_1}} \int_0^t \int_0^j \int_{j-y}^j (x+y) K(x,y) f_j(s,x) f_j(s,y)\ \mathrm{d}x\mathrm{d}y\mathrm{d}s \\
& \qquad \le \frac{\|\vartheta\|_{C^{0,m_1}}}{j^{1-m_1}} \int_0^t \int_0^j \int_{j-y}^j x K(x,y) f_j(s,x) f_j(s,y)\ \mathrm{d}x\mathrm{d}y\mathrm{d}s \\
& \qquad \le \frac{\|\vartheta\|_{C^{0,m_1}}}{j^{1-m_1}} \frac{\sigma+C_{\ref{cst1}}(m_1) t}{\ln{j}}\ .
\end{align*}
Therefore,
\begin{equation}
\lim_{j\to\infty} \frac{1}{2} \int_0^t \int_0^j \int_{j-y}^j K(x,y) \vartheta(x+y) f_j(s,x) f_j(s,y)\ \mathrm{d}x\mathrm{d}y\mathrm{d}s = 0\ . \label{b32}
\end{equation}
Next, since $\vartheta\in \Theta_{m_1}$ and $(m_1,1)\in\mathcal{A}_\nu$ by \eqref{aa11b} and Remark~\ref{rema1}, it follows from \eqref{aa4} and \eqref{aa6} that, for $x\in (0,\infty)$,
\begin{align}
\left| N_\vartheta(x) \right| & \le |\vartheta(x)| + \int_0^x |\vartheta(y)| b(y,x)\ \mathrm{d}y \le \|\vartheta\|_{C^{0,m_1}} \left( x^{m_1} + \int_0^x y^{m_1} b(y,x)\ \mathrm{d}y \right) \nonumber \\
& \le (1+\mathfrak{b}_{m_1,1})   \|\vartheta\|_{C^{0,m_1}} x^{m_1}\ . \label{b33}
\end{align}
Then, using \eqref{aa2}, \eqref{b29}, \eqref{b30}, and \eqref{b33}, we obtain, for $R>1$,
\begin{align*}
& \left| \int_0^t \int_0^\infty a(x) N_\vartheta(x) (f_j-f)(s,x)\ \mathrm{d}x\mathrm{d}s \right| \\
& \qquad \le \left| \int_0^t \int_0^R a(x) N_\vartheta(x) (f_j-f)(s,x)\ \mathrm{d}x\mathrm{d}s \right| + \int_0^t \int_R^\infty a(x) N_\vartheta(x) (f_j+f)(s,x)\ \mathrm{d}x\mathrm{d}s \\
& \qquad \le \left| \int_0^t \int_0^R a(x) N_\vartheta(x) (f_j-f)(s,x)\ \mathrm{d}x\mathrm{d}s \right| \\
& \qquad\quad + a_0 (1+\mathfrak{b}_{m_1,1})   \|\vartheta\|_{C^{0,m_1}} \int_0^t \int_R^\infty x^{m_1+\lambda-1} (f_j+f)(s,x)\ \mathrm{d}x\mathrm{d}s \\
& \qquad \le \left| \int_0^t \int_0^R a(x) N_\vartheta(x) (f_j-f)(s,x)\ \mathrm{d}x\mathrm{d}s \right| \\
& \qquad\quad + a_0 (1+\mathfrak{b}_{m_1,1}) \|\vartheta\|_{C^{0,m_1}} R^{m_1-1} \int_0^t \left( M_\lambda(f_j)(s) + M_\lambda(f)(s) \right)\ \mathrm{d}s \\
& \qquad \le \left| \int_0^t \int_0^R a(x) N_\vartheta(x) (f_j-f)(s,x)\ \mathrm{d}x\mathrm{d}s \right| + 2 a_0 (1+\mathfrak{b}_{m_1,1}) \|\vartheta\|_{C^{0,m_1}} C_{\ref{cst12}}(t) R^{m_1-1}\ .
\end{align*}
Now, $x\mapsto a(x) N_\vartheta(x) \mathbf{1}_{(0,R)}(x) x^{-m_1}$ belongs to $L^\infty((0,\infty))$ by \eqref{aa2} and \eqref{b33} which, together with \eqref{b25}, implies that the first term in the right-hand side of the previous inequality converges to zero as $j\to \infty$. Consequently, for all $R>1$, 
\begin{equation*}
\limsup_{j\to\infty} \left| \int_0^t \int_0^\infty a(x) N_\vartheta(x) (f_j-f)(s,x)\ \mathrm{d}x\mathrm{d}s \right| \le 2 a_0 (1+\mathfrak{b}_{m_1,1}) \|\vartheta\|_{C^{0,m_1}} C_{\ref{cst12}}(t) R^{m_1-1}\ .
\end{equation*}
Letting $R\to\infty$ in the above inequality gives, since $m_1<1$,
\begin{equation}
\lim_{j\to\infty} \int_0^t \int_0^\infty a(x) N_\vartheta(x) f_j(s,x)\ \mathrm{d}x\mathrm{d}s = \int_0^t \int_0^\infty a(x) N_\vartheta(x) f(s,x)\ \mathrm{d}x\mathrm{d}s\ . \label{b34}
\end{equation}
Finally, since $m_0<\alpha\le\lambda-\alpha\le 1$ by \eqref{aa1} and \eqref{aa11}, the by now classical argument designed in \cite{Stew89}, and further developed in \cite{BLLxx, ELMP03, LaMi02b}, can be applied to deduce from Lemma~\ref{Lemb4} (with $m=m_0$), \eqref{b25}, and \eqref{b26} that
\begin{align}
& \lim_{j\to\infty} \frac{1}{2} \int_0^t \int_0^\infty \int_0^\infty K(x,y) \chi_\vartheta(x,y) f_j(s,x) f_j(s,y)\ \mathrm{d}y\mathrm{d}x\mathrm{d}s \nonumber\\ 
& \qquad = \frac{1}{2} \int_0^t \int_0^\infty \int_0^\infty K(x,y) \chi_\vartheta(x,y) f(s,x) f(s,y)\ \mathrm{d}y\mathrm{d}x\mathrm{d}s\ . \label{b35}
\end{align}
Collecting \eqref{b32}, \eqref{b34}, and \eqref{b35}, we may take the limit $j\to\infty$ in \eqref{b31} to obtain \eqref{b0}, after handling  the left-hand side of \eqref{b31} with \eqref{b3} and \eqref{b25}.
\end{proof}

\begin{proof}[Proof of Theorem~\ref{ThmP1}~(b): Local boundedness of moments]
Consider $m\in (-\nu-1,m_0)\cup (1+\lambda-\alpha,\infty)$ such that $f^{in}\in X_m$ and let $f$ be the mass-conserving weak solution to \eqref{a1} on $[0,\infty) $ given by \eqref{b25}. For $T>0$, it readily follows from \eqref{b25}, Lemma~\ref{Lemb4} (if $m\in (-\nu-1,m_0)$) or Lemma~\ref{Lemb5} (if $m>1+\lambda-\alpha$), and Fatou's lemma that $f\in L^\infty((0,T),X_m)$ as claimed.
\end{proof}

%%%%%%%%%%%%%%%%
%%%%%%%%%%%%%%%%
\section{Weak Solutions: Uniqueness} \label{sec3}
%%%%%%%%%%%%%%%%
%%%%%%%%%%%%%%%%

The final step of the proof of Theorem~\ref{ThmP1} is the uniqueness of mass-conserving weak solutions to \eqref{a1} on $[0,\infty)$ having a finite moment of sufficiently high order and the proof given below follows quite closely the approach developed in \cite{EMRR05, LaMi04, Norr99}.

\begin{proof}[Proof of Theorem~\ref{ThmP1}~(c): Uniqueness] 
Let $f^{in}$ be an initial condition in $X_{m_0}\cap X_{2\lambda-\alpha}$ and two mass-conserving weak solutions $f_1$ and $f_2$ to \eqref{a1} on $[0,\infty)$ which satisfy
\begin{equation}
\mathcal{M}_{2\lambda-\alpha}(T) := \sup_{t\in [0,T]}\{M_{2\lambda-\alpha}(f_1(t))\} + \sup_{t\in [0,T]}\{M_{2\lambda-\alpha}(f_2(t))\} < \infty\ , \qquad T>0\ . \label{u1}
\end{equation}
Observe that the existence of at least such a solution is guaranteed by Theorem~\ref{ThmP1}~(a)-(b) as $2\lambda-\alpha> 1+\lambda-\alpha$. According to Definition~\ref{DefS1} and the choice \eqref{aa11} of $m_0$, there also holds
\begin{equation}
\mathcal{M}_{\alpha}(T) := \sup_{t\in [0,T]}\{M_{\alpha}(f_1(t))\} + \sup_{t\in [0,T]}\{M_{\alpha}(f_2(t))\} < \infty\ , \qquad T>0\ . \label{u2}
\end{equation}
We set $W(x) := x^\alpha + x^\lambda$ for $x\ge 0$, $E:= f_1-f_2$, and $\Sigma := \mathrm{sign}(E)$. Let $T>0$. We infer from \eqref{a1} that, for $t\in (0,T)$,
\begin{align}
\frac{\mathrm{d}}{\mathrm{d}t} \int_0^\infty W(x) |E(t,x)|\ \mathrm{d}x & = \frac{1}{2} \int_0^\infty \int_0^\infty K(x,y) \chi_{W \Sigma(t)}(x,y) (f_1+f_2)(t,x) E(t,y)\ \mathrm{d}y\mathrm{d}x \nonumber \\
& \qquad - \int_0^\infty a(y) N_{W\Sigma(t)}(y) E(t,y)\ \mathrm{d}y\ . \label{u3}
\end{align}  
On the one hand, since $\Sigma^2 E = E$, it follows from the convexity of $x\mapsto x^\lambda$ and  the subadditivity of $x\mapsto x^\alpha$ and $x\mapsto x^{\lambda-1}$ that, for $(x,y)\in (0,\infty)^2$, 
\begin{align*}
\chi_{W \Sigma(t)}(x,y) E(t,y) & = \left[ W(x+y) \Sigma(t,x+y) - W(x) \Sigma(t,x) - W(y) \Sigma(t,y) \right] E(t,y) \\
& \le \left[ W(x+y) + W(x) - W(y) \right] |E(t,y)| \\
& \le \left[ x^\alpha + \lambda x (x+y)^{\lambda-1} + x^\alpha + x^\lambda \right] |E(t,y)| \\
& \le \left[ 2 x^\alpha + (\lambda+1) x^\lambda +\lambda xy^{\lambda-1} \right] |E(t,y)| \\
& \le 3 \left[ x^\alpha + x^\lambda + x y^{\lambda-1} \right] |E(t,y)| \ .
\end{align*}
Therefore, by \eqref{aa3}, 
\begin{align*}
& \hspace{-0.5cm} K(x,y) \chi_{W \Sigma(t)}(x,y) E(t,y) \\
& \le 3 K_0 \left( x^\alpha + x^\lambda + xy^{\lambda-1} \right) \left( x^\alpha y^{\lambda-\alpha} + x^{\lambda-\alpha} y^\alpha \right) |E(t,y)| \\
& \le 3 K_0 \left( x^{2\alpha} y^{\lambda-\alpha} + x^{\alpha+\lambda} y^{\lambda-\alpha} + x^{1+\alpha} y^{2\lambda-\alpha-1} \right) |E(t,y)| \\
& \qquad + 3 K_0 \left( x^{\lambda} y^{\alpha} + x^{2\lambda-\alpha} y^{\alpha} + x^{\lambda+1-\alpha} y^{\lambda+\alpha-1} \right) |E(t,y)| \ .
\end{align*}
Observing that all the exponents in the powers of $y$ range in $[\alpha,\lambda]$ and all the exponents in the powers of $x$ range in $[\alpha,2\lambda-\alpha]$ by \eqref{aa1}, it further follows from several applications of Young's inequality that
\begin{align*}
& \hspace{-0.5cm} K(x,y) \chi_{W \Sigma(t)}(x,y) E(t,y) \\
& \le 3 K_0 \left( x^{2\alpha} + x^{\alpha+\lambda} + x^{1+\alpha} +  x^{\lambda} + x^{2\lambda-\alpha} + x^{\lambda+1-\alpha} \right) W(y) |E(t,y)| \\
& \le 18 K_0 \left( x^\alpha + x^{2\lambda-\alpha} \right) W(y) |E(t,y)|\ .
\end{align*}
On the other hand, we infer from \eqref{aa4}, \eqref{aa5}, \eqref{aa6}, and \eqref{aa8} that, for $y>0$, 
\begin{align*}
N_{W \Sigma(t)}(y) E(t,y) & = W(y) |E(t,y)| - \int_0^y W(x) \Sigma(t,x) b(x,y) E(t,y)\ \mathrm{d}x \\
& \ge \left( W(y) - \int_0^y W(x) b(x,y)\ \mathrm{d}x \right) |E(t,y)| \\
& \ge \left( \int_0^1 \left[ y^\alpha z + y^\lambda z - (yz)^\alpha - (yz)^\lambda \right] B(z)\ \mathrm{d}z \right) |E(t,y)| \\
& \ge - \mathfrak{b}_{\alpha,1} y^\alpha |E(t,y)|\ , 
\end{align*}
Inserting the previous estimates in \eqref{u3} and using \eqref{aa2}, \eqref{u1}, \eqref{u2}, and the inequality $y^{\alpha+\lambda-1} \le W(y)$, $y>0$, lead us to 
\begin{align*}
\frac{\mathrm{d}}{\mathrm{d}t} \int_0^\infty W(x) |E(t,x)|\ \mathrm{d}x & \le 9 K_0 \int_0^\infty \int_0^\infty \left( x^\alpha + x^{2\lambda-\alpha} \right) W(y) (f_1+f_2)(t,x) |E(t,y)|\ \mathrm{d}y\mathrm{d}x \\
& \qquad + a_0 \mathfrak{b}_{\alpha,1} \int_0^\infty y^{\alpha+\lambda-1} |E(t,y)|\ \mathrm{d}y \\
& \le \left[ 9 K_0 \left( \mathcal{M}_\alpha(T) + \mathcal{M}_{2\lambda-\alpha}(T) \right) + a_0 \mathfrak{b}_{\alpha,1} \right] \int_0^\infty W(x) |E(t,x)|\ \mathrm{d}x\ .
\end{align*}
Integrating with respect to time gives
\begin{equation*}
\int_0^\infty W(x) |E(t,x)|\ \mathrm{d}x \le e^{\left[ 9 K_0 \left( \mathcal{M}_\alpha(T) + \mathcal{M}_{2\lambda-\alpha}(T) \right) + a_0 \mathfrak{b}_{\alpha,1}  \right]t} \int_0^\infty W(x) |E(0,x)|\ \mathrm{d}x = 0
\end{equation*}
for $t\in [0,T]$. Hence, $f_1(t)=f_2(t)$ for all $t\in [0,T]$, and the claimed uniqueness follows as $T$ is arbitrary in $(0,\infty)$.
\end{proof}

%%%%%%%%%%%%%%%%
%%%%%%%%%%%%%%%%
%\section*{Acknowledgments}
%%%%%%%%%%%%%%%%
%%%%%%%%%%%%%%%%

%%%%%%%%%%%%%%%%
%%%%%%%%%%%%%%%%
\bibliographystyle{siam}
\bibliography{GlobalCFCritical}
%%%%%%%%%%%%%%%%
%%%%%%%%%%%%%%%%

\end{document}